\documentclass[12pt,UKenglish]{amsart}
\usepackage[margin=25 mm,bottom=35mm,footskip=15mm,top=35mm]{geometry}
\usepackage{enumerate,url,amssymb,amsthm,amsmath,xstring,enumitem,hyperref,subcaption,floatrow,setspace,tikz}
\usepackage[UKenglish]{babel}
\usepackage[numbers]{natbib}
\usepackage{xcolor}
\usepackage{etoolbox} 

\newtheorem{thm}{Theorem}[section]

\theoremstyle{plain}
\newtheorem{lemma}[thm]{Lemma}
\newtheorem{prop}[thm]{Proposition}
\newtheorem{cor}[thm]{Corollary}

\newtheorem{conj}[thm]{Conjecture}

\newtheorem{q}[thm]{Question}

\theoremstyle{definition}
\newtheorem{defn}[thm]{Definition}
\newtheorem{example}[thm]{Example}

\theoremstyle{remark}
\newtheorem{rem}[thm]{Remark}

\AtBeginEnvironment{thm}{\setlength{\parskip}{0pt}}
\AtBeginEnvironment{lemma}{\setlength{\parskip}{0pt}}
\AtBeginEnvironment{prop}{\setlength{\parskip}{0pt}}
\AtBeginEnvironment{cor}{\setlength{\parskip}{0pt}}
\AtBeginEnvironment{conj}{\setlength{\parskip}{0pt}}
\AtBeginEnvironment{q}{\setlength{\parskip}{0pt}}
\AtBeginEnvironment{defn}{\setlength{\parskip}{0pt}}
\AtBeginEnvironment{example}{\setlength{\parskip}{0pt}}
\AtBeginEnvironment{rem}{\setlength{\parskip}{0pt}}
\AtBeginEnvironment{proof}{\setlength{\parskip}{0pt}}

\title{The Manickam-Mikl\'os-Singhi Property in Graphs and Hypergraphs}
\author{Adam D\v{z}avoronok}
\address{Department of Applied Mathematics, Charles University, Faculty of Mathematics and Physics}
\email{adam.dzavoronok@mff.cuni.cz}

\begin{document}
\begin{abstract}
This paper studies the Manickam--Mikl\'os--Singhi (MMS) property for graphs and hypergraphs. Using the structural characterisation of the $2$-uniform case, we construct new families of regular graphs with the MMS property. We then analyse the Erd\H{o}s--R\'enyi random graph model $\mathbf{G}(n,p)$ and identify regimes in which the MMS property holds with high probability. Finally, we extend the matching-based sufficient condition to higher uniformities via pseudo-matchings and introduce a blowout construction that produces higher-uniformity hypergraphs with the MMS property from lower-uniformity examples.
\end{abstract}
\raggedbottom 
\maketitle

\section{Introduction}

Consider $n$ real numbers $x_1, x_2, \dots, x_n$ with a nonnegative sum. A natural question in extremal combinatorics asks: how many $k$-element subsets are guaranteed to also have a nonnegative sum? To motivate the lower bound for this problem, consider the weight distribution where one element is assigned a weight of $1$ and the remaining $n-1$ elements are assigned a weight of $-\frac{1}{n-1}$. The total sum is $0$, and the only $k$-element subsets with a nonnegative sum are those containing the element with weight $1$. Since there are exactly $\binom{n-1}{k-1}$ such subsets, this establishes a natural minimum. The Manickam--Mikl\'os--Singhi (MMS) conjecture \cite{manickam1987number, manickam1988first} asserts that this bound is tight for all sufficiently large $n$:

\begin{conj}[Manickam-Mikl\'os-Singhi]\label{conj:mms}
  Let $n,k$ be natural numbers such that $n \ge 4k$. Then, for any $n$ real numbers $x_1, x_2, \dots, x_n$ satisfying $\sum_{i=1}^n x_i \ge 0$, there exist at least $\binom{n-1}{k-1}$ subsets $A \subseteq [n]$ such that $|A|=k$ and $\sum_{i \in A} x_i \ge 0$.
\end{conj}

Some lower bound on $n$ is necessary. For example, when $k=2m+1$ and $n=4m$, set $x_1=x_2=\dots=x_{2m}=1$ and $x_{2m+1}=\dots=x_{4m}=-1$. By symmetry, there are as many $k$-subsets with positive sum as with negative sum; hence there are $\frac{1}{2}\binom{4m}{2m+1}$ nonnegative subsets, which is less than $\binom{4m-1}{2m}=\frac{2m+1}{4m}\binom{4m}{2m}$.

A second example occurs when $n=3k+1$. Set $x_1=x_2=x_3=2-3k$ and $x_4=\dots=x_{3k+1}=3$. Then a $k$-subset has nonnegative sum if and only if it contains none of $x_1,x_2,x_3$. Thus the number of nonnegative sets is $\binom{3k-2}{k}$, which is less than $\binom{3k}{k-1}$ for $k>2$.

A notable special case of Conjecture~\ref{conj:mms}, proved by Manickam and Singhi \cite{manickam1988first}, occurs when $k$ divides $n$:

\begin{thm}\label{thm:baranyai}
  Given $n$ reals $x_1, x_2, \dots, x_n$ with a nonnegative sum and a positive integer $k$, such that $k \mid n$, there exist at least $\binom{n-1}{k-1}$ $k$-sets with a nonnegative sum.
\end{thm}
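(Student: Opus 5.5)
The label of the theorem points to the intended route: Baranyai's theorem. Write $n = mk$. Baranyai's theorem says the complete $k$-uniform hypergraph $\binom{[n]}{k}$ decomposes into perfect matchings. Each matching consists of $m = n/k$ pairwise disjoint $k$-sets that together cover $[n]$. The number of such matchings is
\[
  \binom{n}{k}\Big/\frac{n}{k} \;=\; \binom{n-1}{k-1}.
\]
The plan is to extract one nonnegative $k$-set from each matching. Because the matchings in a decomposition are edge-disjoint, this gives $\binom{n-1}{k-1}$ distinct nonnegative $k$-sets.

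The key step is simple averaging. Fix a perfect matching $\{A_1,\dots,A_m\}$ from the decomposition. Its blocks partition $[n]$, so
\[
  \sum_{j=1}^m \sum_{i\in A_j} x_i \;=\; \sum_{i=1}^n x_i \;\ge\; 0 .
\]
Hence at least one block $A_j$ has $\sum_{i\in A_j} x_i \ge 0$. Choose such a block from every matching.

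The chosen sets are pairwise distinct, since each $k$-set lies in exactly one matching of the decomposition. So we obtain at least $\binom{n-1}{k-1}$ nonnegative $k$-sets, as required.

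The only nontrivial ingredient is the existence of the factorisation, that is, Baranyai's theorem for $k \mid n$. I would cite it rather than reprove it. If a self-contained argument were wanted, the main obstacle would be proving Baranyai's theorem itself. The standard method is induction on the ground set size. At each stage one maintains a family of $\binom{n-1}{k-1}$ "partial partitions", and the extension step is carried out by an integral flow argument: one applies the integrality theorem for network flows to round a fractional assignment of the next element to the parts.

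Everything else is the single averaging observation above, so the proof is short once Baranyai's theorem is available.
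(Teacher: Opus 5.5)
Your proposal is correct and matches the paper's argument. You use Baranyai's theorem to decompose $K_n^k$ into $\binom{n-1}{k-1}$ edge-disjoint perfect matchings, and averaging gives a nonnegative edge in each; the paper likewise cites Baranyai rather than reproving it.
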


We outline the proof of Theorem~\ref{thm:baranyai}. The main ingredient is Theorem~\ref{thm:baranyai-decomposition}.
\begin{thm}[Baranyai, 1975, \cite{baranyai1975factorization}]\label{thm:baranyai-decomposition}
  Let $n,k$ be positive integers with $k\mid n$. Then the complete $k$-uniform hypergraph $K_n^k$ can be decomposed into $\binom{n-1}{k-1}$ pairwise edge-disjoint perfect matchings.
\end{thm}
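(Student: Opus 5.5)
We prove a stronger statement about \emph{partial} decompositions by induction, and at each step we use the integrality theorem for network flows. Set $N=\binom{n-1}{k-1}$ and $r=n/k$. For $0\le m\le n$ we say that $(\mathcal{A}_1,\dots,\mathcal{A}_N)$ is an \emph{$m$-system} if the following holds.
\begin{itemize}
\item Each $\mathcal{A}_i$ is a multiset of exactly $r$ subsets of $[m]$.
\item The sets of $\mathcal{A}_i$ are pairwise disjoint and their union is $[m]$. Empty parts are allowed, with multiplicity.
\item For every $S\subseteq[m]$, the total multiplicity of $S$ over all the $\mathcal{A}_i$ equals $\binom{n-m}{k-|S|}$. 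This also covers $S=\emptyset$.
\end{itemize}
For $m=0$, take every $\mathcal{A}_i$ to be $r$ copies of $\emptyset$. The required count is $rN=\frac{n}{k}\binom{n-1}{k-1}=\binom{n}{k}$, so this is a $0$-system.

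Now suppose we reach $m=n$. Then a set $S$ has multiplicity $\binom{0}{k-|S|}$, which is $1$ if $|S|=k$ and $0$ otherwise. So every part of every $\mathcal{A}_i$ has size exactly $k$, and each $k$-set occurs in exactly one $\mathcal{A}_i$. The $\mathcal{A}_i$ are therefore $N$ edge-disjoint perfect matchings covering $K_n^k$, which proves Theorem~\ref{thm:baranyai-decomposition}.

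For the inductive step from $m$ to $m+1$, we must insert the element $m+1$ into exactly one part of each $\mathcal{A}_i$. Build a network as follows.
\begin{itemize}
\item A source has an arc of capacity $1$ to a vertex for each $i\in[N]$.
\item Vertex $i$ has an arc to vertex $S$, for each $S\subseteq[m]$, with capacity equal to the multiplicity of $S$ in $\mathcal{A}_i$.
\item Each vertex $S$ has an arc to the sink of capacity $\binom{n-m-1}{k-|S|-1}$.
\end{itemize}
We exhibit a fractional flow of value $N$. Each $i$ sends $\frac{k-|S|}{n-m}$ to each part $S$ of $\mathcal{A}_i$, counted with multiplicity. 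This is nonnegative, since in an $m$-system a part with $|S|>k$ has multiplicity $\binom{n-m}{k-|S|}=0$. The outflow of $i$ is $\sum_{S\in\mathcal{A}_i}(k-|S|)/(n-m)=(rk-m)/(n-m)=1$. The inflow to $S$ is $\binom{n-m}{k-|S|}\frac{k-|S|}{n-m}=\binom{n-m-1}{k-|S|-1}$, so every sink arc is saturated.

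By the integrality theorem there is an integral maximum flow of value $N$. Hence each $i$ selects exactly one part $S_i$ of $\mathcal{A}_i$, and each $S$ is selected exactly $\binom{n-m-1}{k-|S|-1}$ times. Replace $S_i$ by $S_i\cup\{m+1\}$ in $\mathcal{A}_i$.

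We check the new multiplicities. A set $T\cup\{m+1\}$ now has multiplicity $\binom{n-(m+1)}{k-|T\cup\{m+1\}|}$. A set $T\subseteq[m]$ has multiplicity
\[
\binom{n-m}{k-|T|}-\binom{n-m-1}{k-|T|-1}=\binom{n-m-1}{k-|T|}
\]
by Pascal's rule. Each $\mathcal{A}_i$ is still a partition of $[m+1]$ into $r$ possibly empty parts. So the result is an $(m+1)$-system.

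The main obstacle is choosing the strengthened invariant, especially counting empty parts with multiplicity and using the exact multiplicities $\binom{n-m}{k-|S|}$. Once it is fixed, the fractional flow is forced, and the identities $\sum_{S\in\mathcal{A}_i}(k-|S|)=n-m$ and Pascal's rule make every capacity match exactly.
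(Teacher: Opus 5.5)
Your proof is correct. The paper does not prove this statement: it cites Baranyai's 1975 paper and uses the theorem as a black box, to derive Theorem~\ref{thm:baranyai} and later to supply perfect matchings. So your argument is a self-contained replacement for a citation rather than a competing route. What you give is essentially Baranyai's own inductive argument via integral flows, in the standard form with possibly empty parts and multiplicities $\binom{n-m}{k-|S|}$. The citation buys the paper brevity, while your version makes the result self-contained.

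Two small checks are left implicit and deserve a line each:
\begin{itemize}
\item Your fractional flow must respect the middle capacities, i.e.\ $\mu_i(S)\frac{k-|S|}{n-m}\le \mu_i(S)$, which needs $k-|S|\le n-m$. This holds for the same reason as nonnegativity: a part with $k-|S|>n-m$ would have total multiplicity $\binom{n-m}{k-|S|}=0$.
\item You claim the integral flow selects each $S$ \emph{exactly}, not merely at most, $\binom{n-m-1}{k-|S|-1}$ times. This uses that the sink capacities sum to $N$. That follows because your fractional flow of value $N$ saturates all of them, so every flow of value $N$ must saturate them as well.
\end{itemize}
Also note that the inductive step is only run for $m<n$, so dividing by $n-m$ is legitimate.
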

\noindent Each perfect matching contains at least one nonnegative edge, since the total sum of the vertex weights is nonnegative. Hence there are at least $\binom{n-1}{k-1}$ nonnegative edges.

A significant amount of research has focused on finding bounds for $m(k)$, the minimum value such that the conjecture holds for all $n \geq m(k)$. Historical progress on $m(k)$ improved the initial exponential upper bounds of Manickam and Mikl\'os $O(k^k)$ \cite{manickam1987number} and Tyomkyn $O((c \log k)^k)$ \cite{tyomkyn2012improved} to polynomial bounds by Alon, Huang, and Sudakov $33k^2$ \cite{ALON2012784} and Chowdhury, Sarkis, and Shahriari $8k^2$ \cite{chowdhury2014manickam}. Ultimately, a linear bound of $m(k) \leq 10^{46}k$ was established by Pokrovskiy~\cite{POKROVSKIY2015280}.

Motivated by this conjecture, a broader framework was established. We say that a general hypergraph $H$ possesses the \emph{MMS property} if every weighting function $f: V(H) \to \mathbb{R}$ satisfying $\sum_{v \in V(H)} f(v) \ge 0$ induces at least $\delta(H)$ \emph{nonnegative} edges, meaning edges $e$ for which $\sum_{v \in e} f(v) \ge 0$. Again we see that one cannot hope for more than $\delta(H)$ if we consider labelling in which one element is assigned a weight of $1$ and the remaining $(n-1)$ elements are assigned a weight of $-\frac{1}{n-1}$. Providing a full description of all hypergraphs with the MMS property extends far beyond the original MMS conjecture, which is restricted to determining when the complete $k$-uniform hypergraph $K_n^k$ has the MMS property.

Pokrovskiy~\cite{POKROVSKIY2015280} proposed the problem of describing all hypergraphs satisfying the MMS property, or at least identifying large, non-trivial families of them. Through his arguments, one can relate the general MMS property back to the regular MMS conjecture via the following lemma.
\begin{prop}(\cite{POKROVSKIY2015280})\label{prop:pokrovskiy}
If there exists a $d$-regular $k$-uniform hypergraph on $n$ vertices with the MMS property, then $K_n^k$ has the MMS property. 
\end{prop}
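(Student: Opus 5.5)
Fix a $d$-regular $k$-uniform hypergraph $H$ on $[n]$ with the MMS property, and a weighting $x_1,\dots,x_n$ with $\sum_i x_i\ge 0$. The plan is to average over relabellings, as in the proof of Theorem~\ref{thm:baranyai}, but using copies of $H$ where that proof used perfect matchings. For every permutation $\sigma\in S_n$ let $H_\sigma$ be the image of $H$ under $\sigma$, with edges $\sigma(e)$ for $e\in E(H)$. Each $H_\sigma$ is isomorphic to $H$, so it is $d$-regular, has minimum degree $\delta(H_\sigma)=d$, and has the MMS property. The weighting $x$ has nonnegative total sum, so $H_\sigma$ contains at least $d$ nonnegative edges. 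We will double count the pairs $(\sigma,A)$ in which $A$ is a nonnegative $k$-set that is an edge of $H_\sigma$.

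The first step is to compute the number of edges. Since $H$ is $d$-regular and $k$-uniform, $|E(H)|=nd/k$. Next we count how often a fixed $k$-set $A\subseteq[n]$ appears as an edge of $H_\sigma$ as $\sigma$ ranges over $S_n$. For each edge $e\in E(H)$, the number of permutations with $\sigma(e)=A$ is exactly $k!(n-k)!$. Hence $A$ is an edge of $H_\sigma$ for exactly
\[
N=|E(H)|\,k!\,(n-k)! = \frac{nd}{k}\,k!\,(n-k)!
\]
permutations. This count is the same for every $A$, so the family $\{H_\sigma\}$ covers each $k$-set uniformly.

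The double count now gives a lower bound. Let $M$ be the number of nonnegative $k$-sets. Then
\[
M\cdot N=\sum_{\sigma}\#\{\text{nonnegative edges of }H_\sigma\}\ge n!\,d .
\]
Therefore
\[
M\ge \frac{n!\,d\,k}{n d\,k!\,(n-k)!}=\frac{k}{n}\binom{n}{k}=\binom{n-1}{k-1}.
\]
This is exactly $\delta(K_n^k)$, which proves that $K_n^k$ has the MMS property.

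The proof has no real obstacle. The only points needing care are that regularity is used twice: once so that the MMS property of $H_\sigma$ gives $d$ edges rather than merely the minimum degree, and once in the identity $|E(H)|=nd/k$. Degenerate cases with $d=0$ are trivial.
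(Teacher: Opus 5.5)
Your double-counting argument over the relabelled copies $H_\sigma$ is correct. It is essentially the standard averaging proof due to Pokrovskiy, which the paper only cites and does not reproduce, and it gives $M\ge d\binom{n}{k}/|E(H)|=\binom{n-1}{k-1}$ exactly as you say.

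The one slip is your closing remark that $d=0$ is trivial. That case must instead be excluded: the edgeless hypergraph vacuously has the MMS property, so the statement would then claim that every $K_n^k$ does, contradicting e.g.\ the $n=3k+1$ example in the introduction. This is precisely where your $N$ vanishes, so the proposition tacitly assumes $d\ge 1$.
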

Prior progress on this generalized property includes work by Huang and Sudakov \cite{huang2013minimum}, who showed that for $n > 10k^3$, hypergraphs possessing equal codegree for every pair of vertices exhibit the MMS property.

\subsection*{Our Contributions}

Our main contributions are threefold. We begin with the graph case. A structural characterisation of $2$-uniform graphs with the MMS property was given by Kir\'aly, Kulkarni, McMeeking, and Mundinger~\cite{kiraly2018manickammiklossinghiparametergraphsdegree}:

\begin{thm}\label{thm:graph-characterisation}
  A graph $G=(V,E)$ has the MMS property if and only if, for every $E' \subseteq E$ such that $|E'| \le \delta(G)-1$, every independent set $A \subseteq V$ in $G-E'$ has a matching in $G-E'$ that covers every vertex of $A$.
\end{thm}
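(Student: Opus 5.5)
The plan is to prove both directions by contraposition. The basic dictionary is that a weighting $f$ with $\sum_v f(v)\ge 0$ witnessing failure of the MMS property is the same as a set $E'$ of at most $\delta(G)-1$ edges such that $E'$ contains all nonnegative edges of $f$. Equivalently, every edge of $G-E'$ is negative under $f$. Throughout, for $A$ independent in $G-E'$, every edge of $G-E'$ meeting $A$ has its other endpoint in $V\setminus A$. So a matching covering $A$ is exactly an $A$-saturating matching in the bipartite graph $B$ formed by the edges of $G-E'$ between $A$ and $V\setminus A$. This lets us use Hall's theorem.

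\emph{Sufficiency.} Assume the matching condition, and suppose for contradiction that $f$ has nonnegative total sum but fewer than $\delta(G)$ nonnegative edges. Let $E'$ be the set of nonnegative edges, so $|E'|\le\delta(G)-1$, and let $A=\{v: f(v)\ge 0\}$.

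Any edge with both endpoints in $A$ is nonnegative, hence lies in $E'$. Therefore $A$ is independent in $G-E'$, and by hypothesis there is a matching $M$ in $G-E'$ covering $A$. Each edge of $M$ is negative, and the vertices not covered by $M$ lie outside $A$, so they have negative weight. Hence
\[
\sum_v f(v)=\sum_{e\in M}\sum_{v\in e}f(v)+\sum_{v\notin V(M)}f(v)<0 .
\]
This inequality is strict as long as $V\neq\emptyset$. If $A=\emptyset$ then $M=\emptyset$, and the sum is strictly negative directly. Either way this contradicts $\sum_v f(v)\ge 0$.

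\emph{Necessity.} Suppose $E'$ with $|E'|\le\delta(G)-1$ and an independent set $A$ of $G-E'$ admit no matching covering $A$. By Hall's theorem in $B$ there is a nonempty $S\subseteq A$ with $|N(S)|<|S|$, where $N(S)$ is the neighbourhood of $S$ in $G-E'$; it lies in $V\setminus A$. Define $f$ by
\[
f(v)=1 \text{ on } S,\qquad f(v)=-1-\varepsilon \text{ on } N(S),\qquad f(v)=-\eta \text{ elsewhere},
\]
for small $\varepsilon,\eta>0$.

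We check that every edge of $G-E'$ is negative. There are no edges of $G-E'$ inside $S$, and every $G-E'$-neighbour of $S$ lies in $N(S)$. So an edge of $G-E'$ either joins $S$ to $N(S)$, with weight $-\varepsilon$, or has both endpoints of negative weight. Hence all nonnegative edges lie in $E'$, giving fewer than $\delta(G)$ of them.

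For the total,
\[
\sum_v f(v)\ge |S|-(1+\varepsilon)|N(S)|-\eta n\ge 1-\varepsilon n-\eta n>0
\]
once $\varepsilon,\eta<1/(2n)$. So $G$ fails the MMS property.

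The only delicate step is the construction in the necessity direction. The weights on the Hall-violator $S$ and on $N(S)$ must be balanced so that the total stays nonnegative while every edge outside $E'$ is strictly negative. Independence of $A$ in $G-E'$ is exactly what rules out edges inside $S$, and the perturbations $\varepsilon,\eta$ handle the strictness.
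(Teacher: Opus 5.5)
Your proof is correct and follows essentially the same route as the paper. The necessity direction uses the same weighting built from a Hall violator $S$ (weight $1$ on $S$, weights below $-1$ on $N(S)$, small nonpositive weights elsewhere), and the sufficiency direction rests on the same fact that a matching in $G-E'$ covering $A=\{v: f(v)\ge 0\}$ has total weight at least $\sum_v f(v)$. The only difference is organisational: you delete all nonnegative edges at once and derive a contradiction, whereas the paper extracts nonnegative edges one at a time by reapplying the hypothesis to $E(G[A])\cup\{e_1,\dots,e_t\}$. Your version is therefore slightly shorter, and it handles $N(S)=\emptyset$ without a separate case.
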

For completeness, we provide a proof of Theorem~\ref{thm:graph-characterisation} in Section~\ref{sec:mms-graphs}. Using this criterion, for sufficiently large number of vertices and some large enough regularity $d$, we construct connected $d$-regular graphs on $n$ vertices with the MMS property.

\begin{thm}\label{thm:regular-existence}
  For every odd  $n$ with Euler's totient function $\phi(n) \ge 8$, and for every even  $4 \le d \le \phi(n)$, there exists a connected $d$-regular graph on $n$ vertices with the MMS property.
\end{thm}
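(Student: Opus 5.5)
The plan is to use circulant graphs on $\mathbb{Z}_n$ whose connection set consists of units, so that the graph splits into edge-disjoint odd Hamiltonian cycles. We then verify the criterion of Theorem~\ref{thm:graph-characterisation} through Hall's condition, using pigeonhole over the cycles.

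\emph{Construction.} Since $n$ is odd, $a\neq -a$ in $\mathbb{Z}_n$ for every $a\neq 0$. The $\phi(n)$ units therefore split into $\phi(n)/2$ pairs $\{a,-a\}$. Given an even $d$ with $4\le d\le\phi(n)$, pick $d/2$ distinct pairs $\{\pm a_1\},\dots,\{\pm a_{d/2}\}$ with every $a_i$ coprime to $n$. Let $G$ be the Cayley graph $\mathrm{Cay}(\mathbb{Z}_n,\{\pm a_1,\dots,\pm a_{d/2}\})$. It is $d$-regular. Because $a_i$ is a unit, the edges $\{x,x+a_i\}$ form a Hamiltonian cycle $C_i$ of odd length $n$. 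Thus $G$ is the edge-disjoint union of $d/2$ odd Hamiltonian cycles, and in particular it is connected. The hypothesis $\phi(n)\ge 8$ only guarantees that the range $4\le d\le\phi(n)$ is nonempty with room to spare; I do not expect to need it anywhere else.

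\emph{Verification.} Fix $E'\subseteq E(G)$ with $|E'|\le d-1$, and let $A$ be independent in $G-E'$. Every $G-E'$ edge at a vertex of $A$ goes to $V\setminus A$. It therefore suffices to check Hall's condition in the bipartite graph between $A$ and $V\setminus A$: for every nonempty $S\subseteq A$ we need $|N_{G-E'}(S)|\ge|S|$.

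Let $E_i=E'\cap E(C_i)$ and $r_i=|E_i|$. Since $\sum_i r_i\le d-1<2\cdot\frac d2$, pigeonhole gives an index $i$ with $r_i\le 1$. It is enough to show $|N_{C_i-E_i}(S)|\ge|S|$ for this $i$, because $C_i-E_i\subseteq G-E'$ and $S$ is independent in $C_i-E_i$. There are two cases.
\begin{itemize}
\item If $r_i=0$, then $S$ is independent in the odd cycle $C_i$. Each vertex of $S$ has two cycle-neighbours outside $S$, and each outside vertex is adjacent to at most two vertices of $S$. Hence $|N(S)|\ge|S|$, with equality only if $S$ and $N(S)$ alternate around the whole cycle. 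That is impossible for odd $n$, so in fact $|N(S)|\ge|S|+1$.
\item If $r_i=1$, then $C_i-E_i$ is a Hamiltonian path $P$ on $n$ vertices, and $S$ is independent in $P$. Walking along $P$, the vertices of $S$ are separated by gaps. Each vertex of $S$ has a neighbour to its right unless it is the last vertex of $P$. Moreover, distinct vertices of $S$ have distinct right-neighbours, all lying outside $S$. This gives $|N(S)|\ge|S|-1$, with equality only if the last vertex of $P$ lies in $S$.
\end{itemize}
In that equality case, reverse the argument and use left-neighbours: this fails only if the first vertex of $P$ is also in $S$. With both endpoints in $S$, $S$ alternates along $P$ from end to end, which forces $|S|=(n+1)/2$ and $|N(S)|=(n-1)/2$.

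\emph{The main obstacle} is exactly this remaining case. Here both endpoints of the removed edge lie in $S\subseteq A$, so that edge is the one edge of $C_i$ inside $A$, and $A\supseteq S$ is an alternating set of size $(n+1)/2$ along $C_i$. To rule this out I would bring in a second cycle $C_j$. Every edge of $G$ inside $A$ must lie in $E'$. For $A=\{x_0+2ta_i : 0\le t\le (n-1)/2\}$, I would count the pairs in $A$ differing by $\pm a_j$, for each $j\ne i$. Since $2$ is invertible mod $n$, $A$ is an arithmetic progression of length $(n+1)/2$ with step $2a_i$. The difference $a_j$ equals $2a_i\cdot c$ for some $c$. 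Hence $A$ contains roughly $(n+1)/2-|c|$ (or $|c|-(n+1)/2$-type) such pairs, and I expect this to be at least $2$ for each $j$ in most cases.

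Summing over $j\neq i$ would give $|E'|\ge 1+2(d/2-1)=d-1$, with equality only in a rigid configuration. That configuration I would then dispatch directly, or avoid by choosing the $a_j$ suitably, for instance $a_1=1$ and the others chosen so that $2^{-1}a_j$ is not close to $\pm 1$ or $\pm(n-1)/2$. The freedom to make this choice is plausibly where $\phi(n)\ge 8$ enters. Getting this counting to close with exactly $d-1$ removed edges is the step I expect to be delicate.
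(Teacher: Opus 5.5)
\textbf{There is a genuine gap.} The approach is the paper's, but the closing count is left undone and the generator choice is wrong for $d=4$.

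Your reduction is sound. Pigeonhole gives a Hamiltonian cycle $C_i$ with $r_i\le 1$, and your path analysis correctly shows that Hall can fail in $C_i-E_i$ only when $S$ is the alternating set of size $\frac{n+1}{2}$ whose two ends span the deleted edge. This is the Hall-side form of Lemma~\ref{lem:unique-cycle}. But you stop exactly where the theorem's content lies, and two claims around that point are wrong.

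\emph{Arbitrary unit pairs do not work for $d=4$.} Take the pairs $\{\pm1\},\{\pm3\}$ with $3\nmid n$ (e.g.\ $n=11$). Let $A=\{0,2,\dots,n-1\}$ and $E'=\{\{n-1,0\},\{n-3,0\},\{n-1,2\}\}$. These are all the edges of $G$ inside $A$, so $A$ is independent in $G-E'$. Also $|E'|=3=d-1$ and $|A|>|V\setminus A|$. By Theorem~\ref{thm:graph-characterisation}, $C_n^{1,3}$ fails the MMS property. So $\phi(n)\ge 8$ is needed for more than nonemptiness.

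\emph{Your choice rule targets the wrong values.} Write $a_j=2a_ic$ and $|c|=\min(c,n-c)$. Then $A$ becomes the interval $\{0,1,\dots,\frac{n-1}{2}\}$ with no wrap-around, so $C_j$ has exactly $\frac{n+1}{2}-|c|$ edges inside $A$. Small $|c|$ is harmless. The only critical values are $|c|=\frac{n-1}{2}$, i.e.\ $a_j=\pm a_i$, which is excluded, and $|c|=\frac{n-3}{2}$, i.e.\ $a_j=\pm 3a_i$, which gives exactly two edges. The condition must also hold relative to every $i$, because you do not control which index pigeonhole produces. Fixing $a_1=1$ and constraining only the other $a_j$ is not enough.

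\emph{How to close it.} Each $C_j$ with $j\ne i$ has at least two edges inside $A$, all of which lie in $E'$. It has at least three unless $a_j=\pm3a_i$. Hence $|E'|\ge 1+2(\frac d2-1)+1=d$, a contradiction, unless every $j\ne i$ has $a_j=\pm 3a_i$.
\begin{itemize}
\item For $d\ge 6$ that would force two distinct pairs to equal $\{\pm3a_i\}$, so any choice of unit pairs works.
\item For $d=4$ take $a_1=1$, $a_2=k$ with $k\notin\{\pm1,\pm3,\pm3^{-1}\}$. This exists because there are $\phi(n)/2\ge 4$ pairs and at most three are forbidden. That is precisely where $\phi(n)\ge 8$ enters.
\end{itemize}
This is the count in the proof of Theorem~\ref{thm:circulant}: two nonnegative edges on the cycle generated by $t$ when $|t|=3$, at least three otherwise. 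It is also the same choice of $k$ the paper makes for $d=4$. Without this count and the corrected generator choice, the proposal does not yet prove the statement.
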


Our second contribution concerns the Erd\H{o}s--R\'enyi random graph model. To motivate the study, we note that the MMS property is highly sensitive to local structure and is therefore non-hereditary. For instance, a $d$-regular bipartite graph has the MMS property because it contains $d$ edge-disjoint perfect matchings, but a local modification may destroy the property. The operation illustrated in Figure~\ref{fig:local-modification} deletes two edges incident to a single vertex and rewires them so that one partition class can no longer be covered by a matching, contradicting the criterion in Theorem~\ref{thm:graph-characterisation}.

\begin{figure}[ht]
\centering
\begin{tikzpicture}[yscale=0.5, vertex/.style={circle, fill=black, inner sep=0pt, minimum size=5pt}, edge/.style={line width=1.5pt}]

\node at (-0.5, 3.2) {\Large \textit{d=3}};

\node[vertex] (y1) at (0, 2) {};
\node[vertex] (y2) at (1, 2) {};
\node[vertex] (y3) at (2, 2) {};
\node[vertex] (x1) at (0, 0) {};
\node[vertex] (x2) at (1, 0) {};
\node[vertex] (x3) at (2, 0) {};

\node[above=4pt] at (y2.north) {\Large $Y$};
\node[below=4pt] at (x2.south) {\Large $X$};

\foreach \top in {y1, y2, y3} {
  \foreach \bot in {x1, x2, x3} {
    \draw[edge] (\top) -- (\bot);
  }
}

\draw[->, line width=2.5pt] (2.8, 1) -- (4.2, 1);

\node[vertex] (u) at (5, 2) {};
\node[vertex] (t2) at (6, 2) {};
\node[vertex] (t3) at (7, 2) {};
\node[vertex] (b1) at (5, 0) {};
\node[vertex] (v) at (6, 0) {};
\node[vertex] (w) at (7, 0) {};

\node[right=2pt] at (u.east) {\Large $u$};
\node[below=6pt] at (v.south) {\Large $v$};
\node[below=6pt] at (w.south) {\Large $w$};

\draw[edge] (5, 4.2) ellipse (1.1cm and 0.9cm);
\node at (5, 4.2) {\Large $H$};
\draw[edge] (u.north) -- (4.2, 3.6);
\draw[edge] (u.north) -- (5.8, 3.6);

\draw[edge, red] (u) -- (b1);

\foreach \bot in {b1, v, w} {
  \draw[edge] (t2) -- (\bot);
  \draw[edge] (t3) -- (\bot);
}

\draw[edge, color=blue!70!cyan] (v) -- (w);

\end{tikzpicture}
\caption{A local modification that destroys the MMS property.}
\label{fig:local-modification}
\end{figure}

For the \(\mathbf{G}(n,p)\) model, we prove the following asymptotic result.

\begin{thm}\label{thm:random-graphs}
  Let $\epsilon>0$. For $p = \omega\left({\frac{\log n}{n}}\right)$ or $p<\frac{(1-\epsilon)\log n}{n}$, the random graph $\mathbf{G}(n,p)$ has the MMS property with high probability.
\end{thm}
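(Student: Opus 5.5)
In the sparse regime $p<\frac{(1-\epsilon)\log n}{n}$ the claim is essentially trivial, because the bound $\delta(G)$ degenerates. I will show that $\mathbf{G}(n,p)$ has an isolated vertex with high probability. Let $X$ count isolated vertices, so $\mathbb{E}X=n(1-p)^{n-1}\ge n^{\epsilon-o(1)}\to\infty$. A standard second-moment computation gives $\operatorname{Var}X=o((\mathbb{E}X)^2)$, the only correlation being the single shared potential edge. Then $\delta(G)=0$, and every weighting trivially induces at least $0$ nonnegative edges. (For very small $p$ isolated vertices exist even more abundantly, so no lower cutoff on $p$ is needed.)

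In the dense regime $p=\omega(\log n/n)$ I will verify the criterion of Theorem~\ref{thm:graph-characterisation}. Fix $E'\subseteq E$ with $|E'|\le\delta-1$ and an independent set $A$ of $G-E'$. Any $G-E'$-neighbour of a vertex of $A$ lies outside $A$, so a matching covering $A$ is exactly a system of distinct representatives. By Hall's theorem it suffices to show $|N_{G-E'}(S)|\ge|S|$ for every nonempty $S\subseteq A$. A vertex $w\in N_G(S)$ can leave the neighbourhood only if every edge from $S$ to $w$ lies in $E'$. Hence $|N_{G-E'}(S)|\ge|N_G(S)|-(\delta-1)$, and it is enough to prove $|N_G(S)|\ge|S|+\delta-1$. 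The case $|S|=1$ is immediate, since each vertex keeps degree at least $\delta-(\delta-1)=1$.

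For $|S|\ge2$ I will collect standard with-high-probability facts about $\mathbf{G}(n,p)$ and then split by the size of $S$.
\begin{enumerate}[label=(\roman*)]
\item Chernoff plus a union bound give $\delta,\Delta=(1\pm o(1))np$.
\item Any two vertices have $O(np^2+\log n)=o(np)$ common neighbours.
\item Every set of size $s\le c/p$ satisfies $|N_G(T)|\ge(1-o(1))\,s\,np$ for some small constant $c$.
\item Every set of size $s\ge c/p$ satisfies $|N_G(T)|\ge n-o(n)$.
\item The set $A$ is small: any set of size $C\log n/p$ spans at least $\Omega(\log^2 n/p)\gg np\ge\delta$ edges of $G$, so $|A|=O(\log n/p)=o(n)$.
\end{enumerate}
For $|S|=2$, fact (ii) gives $|N_G(S)|\ge2\delta-o(np)\ge\delta+1$. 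For $2\le|S|\le c/p$, fact (iii) gives $|N_G(S)|\ge(2-o(1))np\ge|S|+\delta$, since $|S|\le c/p\ll np$. For larger $S$, fact (iv) gives $|N_G(S)|\ge n-o(n)\ge|A|+\delta$, because both $|A|$ and $\delta$ are $o(n)$ when $p=o(1)$. If $p$ is bounded below, then $|A|=O(\log n)$ and only small sets occur. In all cases Hall's condition holds.

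The main obstacle is the expansion estimate (iii) uniformly over all $s$ up to $c/p$. The union bound over $\binom{n}{s}$ sets must be beaten by Chernoff tails for $|N_G(T)|$, roughly $e^{-\Omega(s\,np)}$ against $n^s=e^{s\log n}$. This is exactly where $np=\omega(\log n)$ is used, and it requires controlling overlaps of neighbourhoods so that the $(1-o(1))$ factor survives. Extending this uniformly to the threshold $s=c/p$, where (iii) hands over to (iv), is the delicate point.
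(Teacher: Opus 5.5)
\textbf{There is a genuine gap, at the top of the range $p=\omega(\log n/n)$.}

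What works: your treatment of $p<(1-\epsilon)\log n/n$ matches the paper. For $p=o(1)$, your expansion argument is essentially the paper's sparse-regime proof. The ``main obstacle'' you name is not real there. The quantity $|N_G(S)\setminus S|$ is a sum of independent indicators over the outside vertices, and since $|S|\ge2$ a constant-factor bound on its mean suffices. Also, $|A|=o(n)$ follows simply from $|A|\le\alpha(G)+|E'|$.

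The gap: your reduction to $|N_G(S)\setminus S|\ge|S|+\delta-1$ charges every edge of $E'$ against the neighbourhood of $S$. It ignores that $E'$ must already contain all of $E(G[A])$. For $p$ close to $1$ this sufficient condition is false, not merely hard to verify.
\begin{itemize}
\item If $1-p\le n^{-1/2}$, then whp $n-1-\delta=\Delta(\overline{G})\le(1+o(1))\sqrt n$.
\item Take $A=S$ to be any $\lfloor\sqrt n\rfloor$ vertices and $E'=E(G[A])$, which has at most $n/2<\delta$ edges.
\item Then $|N_G(S)\setminus S|\le n-|S|<|S|+\delta-1$. Already for $p=1$ and $|S|=2$ you would need $n-2\ge n$.
\end{itemize}
So ``in all cases Hall's condition holds'' is unjustified.

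Several listed facts are also misstated.
\begin{itemize}
\item Fact (iv) overclaims near $s=c/p$, where $|N_G(T)|\approx(1-e^{-c})n$. This is harmless only while $|A|+\delta=o(n)$.
\item Fact (v) is false for $p\gg\log n/\sqrt n$. A typical set of $\sqrt n$ vertices spans only about $pn/2<\delta$ edges, so $|A|$ can be of order $\sqrt n$. In particular, for $p$ bounded below it is not true that only small sets occur.
\item For $p$ bounded below, codegrees are about $np^2$, not $o(np)$.
\end{itemize}
For $p\le1-\eta$ these can be patched by comparing $(n-s)\bigl(1-(1-p)^s\bigr)$ directly with $s+np$ for $2\le s=O(\sqrt n)$. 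That patch does not survive $p\to1$.

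\textbf{The missing idea} is to exploit the edges inside $A$, which is what the paper does for $p=\omega(\sqrt{\log n/n})$.
\begin{itemize}
\item Whp every set of size at least $\frac{1-\epsilon}{2}(n-1)p$ spans at least $\delta$ edges. This uses Chernoff on the induced edge count plus a union bound over sets of exactly that size. Such a set therefore cannot be independent in $G-E'$.
\item If instead $|A|\le\delta/2$, every vertex of $A$ has at least $\delta-\Delta_A\ge|A|$ neighbours outside $A$. Lebensold's theorem then gives $\delta-\Delta_A$ edge-disjoint matchings saturating $A$.
\item Since $E'\supseteq E(G[A])$, at most $\delta-1-\Delta_A$ edges of $E'$ remain to destroy those matchings, so one survives.
\end{itemize}
This works uniformly up to $p=1$, including $K_n$. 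Combined with your expansion argument for $p=o(1)$, it covers the whole range.
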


For some of the probability ranges, this result also follows from the deep structural results of Krivelevich and Samotij \cite{krivelevich2012optimal}, and of Knox K\"uhn, and Osthus \cite{knox2015edge}.
\begin{thm}[Knox, K\"{u}hn, Osthus \cite{knox2015edge}]
  If $\frac{\log^{50}n}n\leq p\leq1-n^{-1/4}\log^9n$, then in $\mathbf{G}(n,p)$ there are with high probability $\lfloor\delta(G)/2\rfloor$ edge-disjoint Hamilton cycles. Furthermore, if $\delta(G)$ is odd, there is an additional edge-disjoint matching of size $\lfloor n/2\rfloor$.
\end{thm}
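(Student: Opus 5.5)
The natural route is to reduce the statement to a Hamilton decomposition of a suitable regular spanning subgraph of $G=\mathbf{G}(n,p)$. Write $\delta=\delta(G)$. The plan is to find, with high probability, a spanning subgraph $F\subseteq G$ that is $2\lfloor\delta/2\rfloor$-regular, together with (when $\delta$ is odd) an edge-disjoint perfect or near-perfect matching $M$ of $G-E(F)$. It then suffices to decompose $F$ into $\lfloor\delta/2\rfloor$ Hamilton cycles.

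\textbf{Pseudo-random properties.} First I would record the standard properties of $G$ that hold with high probability throughout the range $\frac{\log^{50}n}{n}\le p\le 1-n^{-1/4}\log^9 n$:
\begin{itemize}
\item every degree lies in $np\pm O(\sqrt{np\log n})$;
\item the vertices of degree close to $\delta$ are few and pairwise far apart;
\item edge counts between any two large sets are concentrated (Chernoff plus a union bound);
\item $G$ is a robust $(\nu,\tau)$-outexpander in the sense of K\"uhn and Osthus.
\end{itemize}
The upper bound on $p$ is exactly what keeps the complement $\overline{G}$ from being too sparse. This matters because the concentration and regularisation arguments need some room between degrees and $n$.

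\textbf{Constructing $F$ and $M$.} Next I would construct $F$ by an $f$-factor argument. Apply Tutte's $f$-factor criterion, or equivalently a max-flow/Hall-type condition, with $f\equiv 2\lfloor\delta/2\rfloor$. The deficiency condition reduces to edge-distribution inequalities, which the pseudo-random properties above provide. The delicate point is the vertices whose degree is close to $\delta$, since they must use almost all of their edges in $F$. Because such vertices are rare and well separated, I expect their constraints to decouple. If $\delta$ is odd, I would choose $M$ first, using Hall's theorem in $G$ minus a reserved sparse random subgraph, and then find $F$ in what remains.

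\textbf{Hamilton decomposition of $F$.} The last step is to decompose $F$ into Hamilton cycles. In the dense regime ($p$ bounded below by a constant) this is direct: $F$ is a regular robust expander, since it inherits this from $G$ by being almost spanning in degree, so the K\"uhn--Osthus theorem applies. The sparse range down to $p=\log^{50}n/n$ is the main obstacle, because robust expander decompositions need linear degree. There I would first try an iterative random-partition scheme:
\begin{enumerate}
\item Split $E(F)$ randomly into an approximate-decomposition part and a small reserved ``absorber'' part $R$.
\item Cover almost all edges of the first part by edge-disjoint Hamilton cycles, using a nibble/path-cover argument followed by P\'osa rotation--extension with $R$ to close long paths into Hamilton cycles.
\item Absorb the leftover sparse regular graph with a robust-decomposition-type structure built inside $R$.
\end{enumerate}
The polylogarithmic factors would then come from union bounds over the many rounds of this scheme.
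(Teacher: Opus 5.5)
The paper gives no proof of this statement. It is quoted from Knox, K\"uhn and Osthus purely as context, and the paper's own Theorem~\ref{thm:random-graphs} is proved independently of it. So your proposal has to stand on its own, and as written it is a roadmap whose open step is the entire difficulty.

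Your dense case is sound in outline. For $p$ bounded below by a constant, a $2\lfloor\delta/2\rfloor$-regular spanning $F\subseteq G$ misses only $o(n)$ edges at each vertex. It therefore inherits robust outexpansion, and the K\"uhn--Osthus decomposition theorem for regular robust expanders of linear degree applies. You still owe an actual verification of Tutte's $f$-factor condition, which is delicate at vertices of degree close to $\delta$, as you note.

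Your list of pseudo-random properties is wrong as stated, however. For $p=o(1)$ no vertex has $\nu n$ neighbours, so $G$ is not a robust $(\nu,\tau)$-outexpander for any fixed $\nu>0$. And once $np^2\gg\log n$, every two vertices have common neighbours, so the low-degree vertices cannot be ``far apart'' in any distance sense.

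The genuine gap is the whole range $p=o(1)$. Once $F$ is fixed, you need an \emph{exact} Hamilton decomposition of an essentially arbitrary regular graph of sublinear degree. Your step~3 (``absorb the leftover with a robust-decomposition-type structure built inside $R$'') invokes a tool that does not exist there: the K\"uhn--Osthus robust decomposition is built from dense, regularity-type structure and needs linear degree. You give no sparse substitute. You also give no reason the absorber would survive step~2, which spends edges of $R$ on rotation--extension.

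Fixing $F$ first also discards the slack that makes the sparse case tractable. In $G$ only the few vertices of degree close to $\delta$ are tight. A typical vertex has $d(v)-2\lfloor\delta/2\rfloor=\Theta(\sqrt{np\log n})$ spare edges, which a cycle-by-cycle or 2-factor-plus-rotation construction can afford to leave unused. A proof in this range needs a concrete mechanism that forces all edges at the near-minimum-degree vertices into the $\lfloor\delta/2\rfloor$ cycles while exploiting that slack elsewhere. Your proposal does not supply one, so the sparse half of the statement is not addressed.
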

\begin{thm}[Krivelevich, Samotij \cite{krivelevich2012optimal}]
  There exists a positive constant $\epsilon$ such that the following holds. If
  $\frac{\log n}{n}\leq p\leq n^{-1+\epsilon}$ and $G\sim \mathbf{G}(n, p)$, then $G$ contains a collection of $\lfloor \delta(G)/2\rfloor$ edge-disjoint Hamilton cycles with high probability.
\end{thm}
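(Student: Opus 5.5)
The plan is to build the $\lfloor \delta(G)/2\rfloor$ cycles all at once from carefully prepared pieces, rather than greedily. First I would establish typical properties of $G\sim\mathbf{G}(n,p)$ in the range $\frac{\log n}{n}\le p\le n^{-1+\epsilon}$. Call a vertex \emph{small} if its degree is below $np/100$ (say). With high probability:
\begin{enumerate}
\item[(a)] the set $\mathrm{SMALL}$ has size at most $n^{1-c}$;
\item[(b)] no two small vertices lie within distance $4$ of each other, and no small vertex lies on a short cycle;
\item[(c)] $G$ has strong edge-distribution properties, namely that between disjoint sets $U,W$ the number of edges is $(1+o(1))p|U||W|$ once $|U||W|\gg n\log n/p$, while small sets span few edges;
\item[(d)] every vertex of large degree has almost all of its degree going to non-small vertices.
\end{enumerate}
These are routine first-moment and Chernoff computations.

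Next I would write $r=\lfloor\delta/2\rfloor$ and split the edge set into $r$ graphs $\Gamma_1,\dots,\Gamma_r$, together with a small reserved random graph $R$ of density about $\epsilon' p$. The edges at small vertices are handled deterministically. Each small vertex $v$ has degree at least $2r$, so its incident edges can be paired off, two edges into each $\Gamma_i$. Because of property (b), these choices for different small vertices do not interfere. All remaining edges are distributed uniformly at random among the $\Gamma_i$, except those placed in $R$. Each $\Gamma_i$ then has density $\Theta(p/r)=\Theta(1/n)\cdot\Theta(np/\delta)$, which is bounded below only by roughly $\log n/(n\,\delta)\cdot n$. So I would not ask $\Gamma_i$ to be an expander on its own. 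Instead I would first find, in each $\Gamma_i$ together with the fixed pairs at small vertices, a spanning linear forest with few paths, equivalently a $2$-factor-like structure with $O(n/\log n)$ components. This would come from a matching/flow argument, using property (c) to verify Hall-type conditions.

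The final step turns each linear forest into a Hamilton cycle using Pósa rotation--extension, with boosters taken from the reserve $R\cup\bigcup_{j}\Gamma_j$ not yet used. For this I need that $R$, even after the edges consumed by earlier cycles are deleted, is still a good expander on the non-small vertices, in the sense that $|N(X)|\ge 2|X|$ for $|X|\le n/4$ together with connectivity. Then each path extension or closure needs only one booster, and each cycle consumes only $O(n/\log n)$ reserved edges. Over all $r=O(n^{\epsilon})$ cycles this removes $o(pn^2)$ edges in total and, with care, $o(np)$ edges at each vertex. Small vertices never need rotations, since their two cycle-edges are fixed in advance.

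I expect the main obstacle to be exactly this robustness of the reserve. I must show that $R$ minus an adversarially chosen set of used edges, with maximum degree $o(np)$, remains an expander in which every non-Hamiltonian connected subgraph has $\Omega(n^2p)$ boosters. My first attempt would be to prove a deterministic ``resilience'' lemma: any graph satisfying properties (c)--(d) stays a $(n/4,2)$-expander after deleting edges of maximum degree $o(np)$. I would then combine it with the standard count of boosters in expanders, and take a union bound over the $r$ rounds. The restriction $p\le n^{-1+\epsilon}$ enters in keeping $r\cdot n/\log n$ small relative to the reserve.
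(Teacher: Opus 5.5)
The paper does not prove this statement. It quotes it from \cite{krivelevich2012optimal} as background, and its own proof of Theorem~\ref{thm:random-graphs} deliberately avoids it. So your sketch has to stand on its own, and it breaks at the random splitting step.

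Over almost the whole range the minimum degree is a constant fraction of $np$: already for $np\ge 1.1\log n$ one has $\delta\ge np/100$ w.h.p., and for $np=\omega(\log n)$ even $\delta=(1-o(1))np$. After removing $R$, each $\Gamma_i$ receives about $(1-\epsilon')n\cdot np/\delta$ edges, so its average degree is about $2(1-\epsilon')np/\delta=O(1)$. A typical vertex therefore has $\Gamma_i$-degree at most $1$ with probability bounded away from $0$. Each such vertex is an endpoint or isolated vertex of any spanning linear forest of $\Gamma_i$, so every such forest has $\Theta(n)$ components, not $O(n/\log n)$. No Hall-type condition derived from (c) can repair this, because the obstruction is created by the partition, not by $G$. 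For $np=\omega(\log n)$ it is worse: $\Gamma_i$ has only about $(1-\epsilon')n$ edges, so each Hamilton cycle must take roughly $\epsilon' n$ edges from outside $\Gamma_i$. A random split into $r$ classes only has room when $np/\delta$ grows at least like $\log\log n$, i.e.\ in a thin window inside $np=(1+o(1))\log n$.

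The same tightness destroys your per-vertex budget. All $r$ cycles pass through every vertex and use $2r$ of its edges; at a vertex of minimum degree they use all but at most one. Once $\delta\ge np/100$ such a vertex is not small, so about $\epsilon'\delta=\Theta(np)$ of its edges lie in $R$. It can be covered $r$ times only if essentially all of these reserve edges are used. That contradicts the bound of $o(np)$ used reserve edges per vertex on which your resilience lemma relies. For $np=\omega(\log n)$ this happens at every vertex, since all degrees are $(1+o(1))np$ while $2r=(1-o(1))np$. There are then no small vertices at all, yet every vertex is tight.

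What is missing is a degree-exact intermediate structure:
\begin{itemize}
\item The exceptional set should consist of vertices with small slack $\deg(v)-2r$, not small degree.
\item At all other vertices the reserve must have degree at most the slack. For $np=\omega(\log n)$ this forces density $o(p)$ rather than $\epsilon' p$.
\item Inside $G-R$, together with fixed pairs at the exceptional vertices, you need a $2r$-factor. This would come from an $f$-factor or max-flow argument using the expansion of $G$.
\item Split that factor into $r$ edge-disjoint $2$-factors $F_1,\dots,F_r$ by Petersen's theorem.
\item Give a separate argument that these $2$-factors can be chosen with few cycles.
\end{itemize}
Only then does each $H_i\subseteq F_i\cup R$ need few boosters, and edge-disjointness reduces to not reusing reserve edges. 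Your rotation--extension step can be used at that point, but everything before it has to be replaced.
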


When $n$ and $\delta(G)$ are even, each Hamilton cycle yields at least two nonnegative edges, since it is the union of two perfect matchings. Our proof of Theorem~\ref{thm:random-graphs} is more elementary and avoids this machinery. The case $p<\frac{(1-\epsilon)\log n}{n}$ follows from the standard threshold for isolated vertices in $\mathbf{G}(n,p)$.

Finally, we study the MMS property in higher uniformities. Motivated by the graph case, we introduce pseudo-matchings and prove a general sufficient condition for $k$-uniform hypergraphs. We then define a blowout construction that preserves the MMS property while increasing the uniformity. This construction gives a flexible source of nontrivial hypergraphs with the MMS property and leads to an extremal set-partition problem governing how many such blowouts can be combined edge-disjointly. The precise construction and the resulting extremal questions appear in Section~\ref{sec:high-uniformities}.
\subsection*{Acknowledgements}
 I would like to thank Mykhaylo Tyomkyn for many helpful discussions and comments while supervising my bachelor's thesis, from which this paper originated. This work was supported by GA\v{C}R grant 25-17377S.

\section{The MMS property in graphs}\label{sec:mms-graphs}
We start with the proof of Theorem~\ref{thm:graph-characterisation}.
\begin{proof}[Proof of Theorem~\ref{thm:graph-characterisation}]
 Let $G$ be a graph with the MMS property.
 Suppose for contradiction that there exist $E'\subseteq E$ and an independent set $A$ in $G-E'$ without a matching covering it.
 We will consider the following function $f:V(G)\rightarrow \mathbb{R}$.
 
  In $G-E'$, we take the subset $S\subseteq A$ for which $|N(S)|<|S|$, which exists by the failure of Hall's condition.
 If $|N(S)|>0$, we define $f$ by
 \[f(v)=\begin{cases}
   1 & \text{if } v\in S, \\
   -\dfrac{|S|}{|N(S)|}+\dfrac{\epsilon}{|N(S)|} & \text{if } v\in N(S), \\
   0 & \text{if } v\in A\setminus S, \\
   -\dfrac{\epsilon}{|V|} & \text{otherwise.}
   \end{cases}\]
where we take $0<\epsilon<\frac{|S|}{|N(S)\setminus A|}-1$.
 We verify that
   \begin{align*}     
   \sum_{v\in V}f(v)&=\sum_{v\in S}f(v)+\sum_{v\in N(S)}f(v)+\sum_{v\in A\setminus S }f(v)+\sum_{v\in V\setminus(A\cup N(S))}f(v)   \\ 
   &\ge|S|-|S|+\epsilon+0-\epsilon=0,
   \end{align*}
   If $|N(S)|=0$, then $f(v)=1$ for $v\in S$ and $-\frac{1}{|V|}$ otherwise, which also gives a total nonnegative sum.
 Furthermore, observe that in $G-E'$ every edge is negative.
 This means that in $G$ only the edges in $E'$ could be nonnegative.
 Therefore, in total, there are at most $|E'|=\delta(G)-1$ nonnegative edges, which contradicts the MMS property.
 For the converse, given a graph $G$ with the described edge property and a function $f$ satisfying $\sum_{v\in V} f(v)\ge0$, we will show that there are at least $\delta(G)$ nonnegative edges.
 Let $A:=\{v\in V : f(v)\ge 0\}$. If $|E(G[A])|\ge \delta(G)$, then $G$ has the MMS property. Suppose otherwise.
 By assumption, there exists a matching between $A$ and $N(A)$ in $G-E(G[A])$ covering each vertex of $A$.
 We claim that in the matching $M$ there has to be at least one nonnegative edge, because
 \begin{align*}
 \sum_{uv\in M}\bigl(f(u)+f(v)\bigr)=\sum_{v\in A}f(v)+\sum_{u\in V(M)\setminus A}f(u)&\\
 \ge \sum_{v\in A}f(v)+\sum_{u\in V\setminus A}f(u)=\sum_{v\in V}f(v)\ge0,
   \end{align*}
   where we used that every vertex in $V\setminus A$ has negative weight.
 If $|E(G[A])|=\delta(G)-1$, by including this edge from the matching, we obtain the MMS property, but if not, we can take this edge $e$ and still have a matching between $A$ and $N(A)$ in $G-\bigl(E(G[A])\cup\{e\}\bigr)$ by the assumption.
 This grants us another nonnegative edge. We can proceed this way $\delta-|E(G[A])|-1$ times, giving us $\delta$ nonnegative edges altogether.
 Since $f$ was given, this gives us the MMS property for $G$.
 \end{proof}
 We next show that many graphs from the following family have the MMS property, which will be our building blocks for the proof of Theorem \ref{thm:regular-existence}.
 \begin{defn}
   Let $n$ and $s_1,\dots,s_k$ be pairwise distinct positive integers satisfying $s_i<n$.
 We define the circulant graph $C_n^{s_1,s_2,\dots,s_k}$ as the graph on vertex set $\mathbb{Z}_n$ with an edge between $i$ and $j$ if
\[
i-j \pmod n \in \{\pm s_1,\pm s_2,\dots,\pm s_k\}.
\]
 Moreover, if $\gcd(n,s_i)=1$ for every $i$ and $n\nmid (s_i+s_j)$ for all distinct $i,j$, we say that the circulant graph is \textit{coprime}.
 \end{defn}
 
Even cycles have the MMS property, whereas odd cycles do not.
 On the other hand, for $k\ge2$ and $n>5$ there are coprime circulant graphs on an odd number of vertices that have the MMS property.
 Coprime circulant graphs are unions of edge-disjoint cycles spanning the same vertex set.
 This suggests a strategy for proving the MMS property: we consider the individual cycles rather than the entire graph.
 We therefore only need to treat the case when the graph has an odd number of vertices;
 in the even case, we immediately obtain the MMS property.
 We next show that if a cycle has one nonnegative edge, then the nonnegative vertices have to be arranged in a unique way.
 \begin{lemma}\label{lem:unique-cycle}
  Suppose $f:V(C_{2m+1})\rightarrow \mathbb{R}$ is such that $\sum_v f(v)\ge0$ and there is exactly one edge $uv\in E(C_{2m+1})$ satisfying $f(u)+f(v)\ge0$.
 Then $f(u),f(v)\ge0$ and furthermore, for each $xy\in E(C_{2m+1})\setminus\{uv\}$, the values $f(x)$ and $f(y)$ cannot be both nonnegative or both negative.
 \end{lemma}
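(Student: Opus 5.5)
The plan is to exploit the fact that an odd cycle with one vertex deleted is a path on an even number of vertices, hence has a perfect matching. Whenever that matching avoids the edge $uv$, all its edges are negative. This is the same averaging idea as in the converse direction of the proof of Theorem~\ref{thm:graph-characterisation}.

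Label the cycle as $u=x_0,x_1,\dots,x_{2m}=v$, so that the edges other than $uv$ are $x_jx_{j+1}$ for $0\le j\le 2m-1$. By hypothesis each of these edges is negative.

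\textbf{Step 1: $f(u),f(v)\ge 0$.} Delete $u=x_0$. The remaining vertices $x_1,\dots,x_{2m}$ are perfectly matched by $x_1x_2,\,x_3x_4,\dots,x_{2m-1}x_{2m}$, none of which is $uv$. Then
\[
0\le \sum_{w} f(w)=f(u)+\sum_{t=1}^{m}\bigl(f(x_{2t-1})+f(x_{2t})\bigr)<f(u)+0,
\]
so in fact $f(u)>0$. Symmetrically, deleting $v=x_{2m}$ and using the matching $x_0x_1,\dots,x_{2m-2}x_{2m-1}$ gives $f(v)>0$.

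\textbf{Step 2: no other edge has both endpoints nonnegative.} If $x,y\ge 0$ for some edge $xy\neq uv$, then $f(x)+f(y)\ge 0$. This would be a second nonnegative edge, contradicting uniqueness.

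\textbf{Step 3: no other edge has both endpoints negative.} Suppose $f(x_i),f(x_{i+1})<0$ for some edge $x_ix_{i+1}\ne uv$. By Step 1 neither endpoint is $u$ or $v$, so $1\le i$ and $i+1\le 2m-1$. Deleting a single vertex $x_j$ leaves the path
\[
x_{j+1},\dots,x_{2m},x_0,\dots,x_{j-1}
\]
on $2m$ vertices, which we match in consecutive pairs starting from $x_{j+1}$. The edge $x_{2m}x_0=uv$ is one of these pairs exactly when $x_{2m}$ sits at an odd position in this ordering. That position is $2m-j$, so this happens exactly when $j$ is odd. Since $i$ and $i+1$ have opposite parity, we choose $j\in\{i,i+1\}$ to be even. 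Then the matching on the remaining $2m$ vertices avoids $uv$, so all its edges are negative, and
\[
\sum_w f(w)=f(x_j)+(\text{sum over negative edges})<0,
\]
a contradiction.

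The only delicate point is this parity bookkeeping in Step 3. It is exactly where the two negative endpoints are used: having two adjacent candidates lets us pick the deleted vertex with the right parity, so that the resulting perfect matching misses $uv$.
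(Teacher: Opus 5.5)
Your proof is correct, but it takes a different route from the paper's. The paper first proves that exactly $m+1$ vertices are nonnegative. More than $m+1$ would give two edges with both endpoints nonnegative. With fewer than $m+1$, it greedily builds one or two edge-disjoint matchings that cover the nonnegative vertices and avoid the edge with both endpoints nonnegative, if there is one. Each such matching must then contain a further nonnegative edge, by the averaging argument from the converse direction of Theorem~\ref{thm:graph-characterisation}. The lemma follows because $m+1$ pairwise non-adjacent nonnegative vertices on the path $C_{2m+1}-uv$ must occupy alternate positions.

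You never count nonnegative vertices. Instead you delete one vertex $x_j$ and take the perfect matching of the remaining path on $2m$ vertices. That matching avoids $uv$ exactly when $j$ is even, and this forces the sign of $x_j$. Your route avoids the case split and the greedy matching constructions, which the paper only sketches. Your parity bookkeeping is fully explicit, and you even get the strict inequalities $f(u),f(v)>0$.

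It can be streamlined further. Your deletion argument works for every even $j$, so $f(x_j)>0$ for all even $j$, and then $f(x_j)<0$ for all odd $j$ because each edge $x_{j-1}x_j$ is negative. This gives the whole alternating pattern used in Theorem~\ref{thm:circulant} at once, without a separate Step~3. The paper's version, in turn, has the merit of staying within the matching-covering framework used throughout Section~\ref{sec:mms-graphs}.
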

\begin{proof}
We first show that there have to be exactly $m+1$ nonnegative vertices in this case.
 From this claim, the conclusion of the lemma follows immediately.
 Indeed, if there were more than $m+1$ nonnegative vertices, then we would have two pairs of consecutive nonnegative vertices on the cycle, producing at least two nonnegative edges.
 Now consider the case with fewer than $m+1$ nonnegative vertices.
 We can have either one or no edge with both endpoints nonnegative.
 In each subcase, we will show that there are one or two edge-disjoint matchings covering the nonnegative vertices, which, as argued, yields at least one more nonnegative edge.
 
 In the first subcase, removing the edge results in a path.
 We obtain the desired matching by greedily choosing edges from each side at a time.
 This is possible since the number of nonnegative vertices is less than that of the negative vertices.
 Similarly, in the second subcase, we can obtain the two disjoint matchings by greedily selecting edges adjacent to nonnegative vertices in the clockwise direction and then in the anticlockwise direction, while starting in both procedures from the same vertex, which completes the proof.
 \end{proof}
This yields the following theorem on the MMS property and coprime circulant graphs.
 \begin{thm}\label{thm:circulant}
A coprime circulant graph $C_{2m+1}^{t_1,t_2,\dots,t_k}$ has the MMS property if and only if for each $i\in [k]$ the set
\[\{ |t_i^{-1}t_1|, |t_i^{-1}t_2|,\dots, |t_i^{-1}t_k|\}\pmod{2m+1}\not\subseteq\{1,3\},\]
where $|\ell|=\min(\ell,2m+1-\ell)$ for $\ell\in\mathbb{Z}_{2m+1}$.
 \end{thm}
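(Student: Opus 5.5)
The plan is to split $C_{2m+1}^{t_1,\dots,t_k}$ into Hamilton cycles and apply Lemma~\ref{lem:unique-cycle} to any cycle that has only one nonnegative edge. Write $N=2m+1$ and, for each $i$, let $C_i$ be the cycle $0,t_i,2t_i,\dots$.

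\textbf{Decomposition.} Since $\gcd(N,t_i)=1$, each $C_i$ is a Hamilton cycle. The $t_i$ are pairwise distinct and $N\nmid t_i+t_j$, so the sets $\{\pm t_i\}$ are pairwise disjoint. Hence the $C_i$ are edge-disjoint, their union is the whole graph, and the graph is $2k$-regular. We need $2k$ nonnegative edges.

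Fix $f$ with $\sum f\ge 0$. Summing $f(x)+f(y)$ over the edges of any $C_i$ gives $2\sum f\ge0$, so each $C_i$ has at least one nonnegative edge. If every $C_i$ has at least two, we already have $2k$.

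\textbf{Sufficiency.} Suppose some $C_i$ has exactly one nonnegative edge. Relabel by $x\mapsto t_i^{-1}x$ (an automorphism of $\mathbb Z_N$), so that $C_i$ becomes the standard cycle with step $1$. Rotate so its nonnegative edge is $\{2m,0\}$. By Lemma~\ref{lem:unique-cycle}, the signs alternate along the cycle away from that edge. So the nonnegative set is exactly $A=\{0,2,\dots,2m\}$, using representatives in $\{0,\dots,N-1\}$.

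Each other cycle $C_j$ now has step $r_j=t_i^{-1}t_j$. Its edges $\{x,x+r_j\}$ with both ends in $A$ are certainly nonnegative. Put $\ell=|r_j|$ and count these edges by parity and wrap-around:
\begin{itemize}
\item if $\ell$ is odd, there are $(\ell+1)/2$ of them (the even $x$ with $x+\ell\ge N$);
\item if $\ell$ is even, there are $(N-\ell+1)/2\ge (m+2)/2$ of them.
\end{itemize}
So $|r_j|=1$ gives at least $1$, $|r_j|=3$ gives at least $2$, and every other value gives at least $3$ once $m$ is not tiny. (I expect the very small $m$ to need a direct check.)

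The value $|r_j|=1$ occurs only for $j=i$, by the disjointness of $\{\pm t_j\}$. Likewise, at most one $j$ has $|r_j|=3$. Hence the total number of nonnegative edges is
\[1+\sum_{j\neq i}c_j,\]
where every $c_j\ge2$. If the set in the statement is not contained in $\{1,3\}$, then some $c_j\ge3$ and the total is at least $1+3+2(k-2)=2k$. This proves sufficiency. The hypothesis is needed for every $i$ because we do not control which cycle is the deficient one.

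\textbf{Necessity.} Suppose the set is contained in $\{1,3\}$ for some $i$. Then $k\le2$.
\begin{itemize}
\item For $k=1$ the graph is an odd cycle. Take $f=1$ on $A$ and $-(m+1)/m$ off $A$; then only one edge is nonnegative, fewer than $\delta=2$.
\item For $k=2$, after relabelling the graph is $C_N^{1,3}$. Use the same $f$: the total sum is $0$ and every edge with mixed signs is negative. By the count above there are exactly $1+2=3<4=\delta$ nonnegative edges.
\end{itemize}

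\textbf{Main obstacle.} The delicate step is the parity and wrap-around count of edges inside $A$, together with the small-$m$ cases where the even-$\ell$ bound $(m+2)/2$ might fall below $3$. I would check those cases directly.
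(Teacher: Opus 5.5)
Your approach is the paper's: the Hamilton-cycle decomposition, Lemma~\ref{lem:unique-cycle} to force alternating signs on a deficient cycle, relabelling by $t_i^{-1}$, and counting edges of the other cycles inside $A$. Your exact counts, $(\ell+1)/2$ for odd $\ell$ and $m+1-\ell/2$ for even $\ell$, are more careful than the paper's ``direct verification''. So is your explicit weighting for necessity, compared with its ``the converse follows from the same argument''.

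\textbf{The gap is the small-$m$ check you postpone.} It is not routine, and it cannot be completed, because the statement is false at $2m+1=5$. There the only even value is $\ell=2$, which gives just $m=2$ edges inside $A$. This is a genuine counterexample. The graph $C_5^{1,2}$ is coprime and equals $K_5$. Both of its sets equal $\{1,2\}$, since $2^{-1}=3$ and $|3|=2$ in $\mathbb{Z}_5$, so the condition holds. Yet the weighting $1,1,1,-3/2,-3/2$ has total $0$, and only the three edges inside the positive triple are nonnegative. That is fewer than $\delta=4$.

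\textbf{For $2m+1\ge 7$ your argument closes with no case-checking:}
\begin{itemize}
\item if $m=3$, the only even value is $\ell=2$, which gives $3$ edges;
\item if $m\ge 4$, then $m+1-\ell/2\ge (m+2)/2\ge 3$;
\item odd $\ell\ge 5$ always gives at least $3$;
\item every $\ell\ge 2$ gives at least $2$ once $m\ge 2$.
\end{itemize}
So the theorem needs the extra hypothesis $2m+1\ge 7$, and with it your proof is complete. The paper's proof has the same hole: its witnesses $\{i,i+\ell\}$, $i\in\{1,3,5\}$, for even $\ell$ wrap around modulo $5$ and land on a negative vertex.
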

\begin{proof}

 Let $C_{2m+1}^{t_1,t_2\dots ,t_k}$ be a coprime circulant graph satisfying the described condition. We show it has the MMS property.
 First, we observe that each cycle gives us at least one nonnegative edge.
 On the other hand, if each cycle always has $2$ nonnegative edges, then the coprime circulant graph has the MMS property.
 Hence, we can assume that there exists a function and a cycle with just $1$ nonnegative edge.
 Without loss of generality, assume that this is the cycle generated by $t_1$.
 Consider the relabelling
\[
\varphi : \mathbb{Z}_{2m+1} \to \mathbb{Z}_{2m+1}, \qquad \varphi(x)=t_1^{-1}x.
\]
Since $\gcd(t_1,2m+1)=1$, the element $t_1$ is invertible modulo $2m+1$, so
$\varphi$ is a permutation of the vertex set.
 If $\{x,y\}$ is an edge in the cycle
generated by $t_i$, then
\[
x-y \equiv \pm t_i \pmod{2m+1},
\]
and therefore
\[
\varphi(x)-\varphi(y)=t_1^{-1}(x-y)\equiv \pm t_1^{-1}t_i \pmod{2m+1}.
 \]
Thus, after relabelling, we obtain the graph
\(
C_{2m+1}^{1,t_1^{-1}t_2,\dots,t_1^{-1}t_k},
\)
and the cycle with exactly one nonnegative edge is now the cycle generated by $1$.
 Transporting the weight function along $\varphi$ preserves the number of
nonnegative edges in each cycle.
 We may therefore assume that the cycle generated by $1$ has exactly one
nonnegative edge.
 We fix an order of vertices, such that the vertices $u,v$ from Lemma~\ref{lem:unique-cycle} are $1$ and $2m+1$, then by Lemma~\ref{lem:unique-cycle} the nonnegative vertices are exactly the odd numbers.
 We can now consider the other cycle generated by some $t_1^{-1}t_i$.
 If $|t_1^{-1}t_i|=3$, then a direct verification shows that this cycle has exactly $2$ nonnegative edges given by $\{1,2m-1\}$ and $\{2m+1,3\}$.
 In the other cases, where $|t_1^{-1}t_i|\ge5$ or even, we have at least $3$ nonnegative edges as seen by $\{i,i+|t_1^{-1}t_i|\}$ if $|t_1^{-1}t_i|$ is even or $\{2m+2-i,2m+2-i+|t_1^{-1}t_i|\}$ in the odd case, where $i\in\{1,3,5\}$.
 So overall, for the described coprime circulant graphs, there is at least one cycle with three nonnegative edges and exactly one cycle with one nonnegative edge.
 This proves one direction. The converse follows from the same argument.
\end{proof}
We observe the following consequence of Theorem~\ref{thm:circulant}.
 \begin{cor}\label{cor:circulant}
  For $k\ge3$, the coprime-circulant graph $C_n^{s_1,s_2,\dots ,s_k}$ has the MMS property for any valid tuples $s_1,s_2,\dots ,s_k$ and $n$.
 \end{cor}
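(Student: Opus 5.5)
For $n$ odd the plan is to reduce directly to Theorem~\ref{thm:circulant}, and to check that its arithmetic condition holds automatically once $k\ge 3$. The even case is handled separately and is easy.

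\emph{Even $n$.} Each generator $s_i$ is coprime to $n$, so it generates a Hamilton cycle on $\mathbb{Z}_n$. The condition $n\nmid(s_i+s_j)$ for $i\neq j$, together with distinctness of the $s_i$, makes these $k$ cycles pairwise edge-disjoint. Since $n$ is odd-free here, no $s_i$ can equal $n/2$ (it is coprime to $n$), so each cycle contributes exactly $2$ to every degree. Hence the graph is $2k$-regular with $\delta=2k$. Each even Hamilton cycle is the union of two perfect matchings. Each perfect matching contains a nonnegative edge when $\sum f\ge 0$. This gives $2k$ nonnegative edges, so the MMS property holds immediately.

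\emph{Odd $n=2m+1$.} Write the generators as $t_1,\dots,t_k$ and fix $i\in[k]$. I need to show that
\[
D_i=\{|t_i^{-1}t_1|,\dots,|t_i^{-1}t_k|\}\not\subseteq\{1,3\}.
\]
The plan has three steps.
\begin{enumerate}
\item For $j\neq i$ we have $|t_i^{-1}t_j|\neq 1$. Indeed, $t_i^{-1}t_j\equiv\pm1$ would force $t_j\equiv t_i$, contradicting distinctness, or $t_j\equiv -t_i$, contradicting $n\nmid(t_i+t_j)$.
\item The map $j\mapsto |t_i^{-1}t_j|$ is injective. If $t_i^{-1}t_j\equiv\pm t_i^{-1}t_l$, then multiplying by the unit $t_i$ gives $t_j\equiv\pm t_l$, which the same two conditions exclude for $j\neq l$.
\item Consequently $D_i$ has exactly $k\ge3$ elements. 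Exactly one of them equals $1$ (from $j=i$), so at least two distinct elements are different from $1$. They cannot both equal $3$, so $D_i\not\subseteq\{1,3\}$.
\end{enumerate}
Since $i$ was arbitrary, Theorem~\ref{thm:circulant} yields the MMS property.

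The only step needing care is step~2: verifying that the reduction modulo $\pm$ (the absolute value $|\cdot|$) does not collapse distinct generators. This is exactly where the coprime hypothesis $n\nmid(s_i+s_j)$ is used, alongside invertibility of $t_i$ modulo $n$. The rest is bookkeeping.
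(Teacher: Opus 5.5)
Your proof is correct and follows the paper's route: the paper states the corollary as an immediate consequence of Theorem~\ref{thm:circulant}, and your check that $j\mapsto|t_i^{-1}t_j|$ is injective (from distinctness and $n\nmid(t_i+t_j)$) gives $k\ge 3$ distinct values, which cannot fit in $\{1,3\}$; your even case is the paper's remark that an even coprime circulant graph is an edge-disjoint union of even Hamilton cycles, each the union of two perfect matchings. The only blemish is the phrase ``$n$ is odd-free'', which should read ``$n$ is even''.
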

Now we may finish the proof of Theorem \ref{thm:regular-existence}
\begin{proof}[Proof of Theorem~\ref{thm:regular-existence}]
  Write $d=2r$. Since $d\le \phi(n)$, we have $r\le \phi(n)/2$. The units of $\mathbb{Z}_n$ split into $\phi(n)/2$ pairs $\{\pm a\}$. Hence we may choose $r$ pairwise non-opposite units, one of which is $1$.

  If $r\ge 3$, let $s_1,\dots,s_r$ be such a choice with $s_1=1$. Then $C_n^{s_1,\dots,s_r}$ is connected, since it contains the cycle generated by $1$, and it is $2r=d$ regular. By Corollary~\ref{cor:circulant}, it has the MMS property. It remains to consider $r=2$, that is, $d=4$. Since $\phi(n)\ge 8$, among the pairs $\{\pm a\}$ of units we can choose a unit $k$ such that
  \[
    |k|\neq 1,3
    \qquad\text{and}\qquad
    |k^{-1}|\neq 3.
  \]
  Then $C_n^{1,k}$ is connected, $4$-regular, and satisfies the condition of Theorem~\ref{thm:circulant}. Hence it has the MMS property.
\end{proof}

\section{The MMS property in random graphs}
As noted in the Introduction, the case $p<\frac{(1-\epsilon)\log n}{n}$ follows from the standard threshold for isolated vertices in $\mathbf{G}(n,p)$: an isolated vertex has degree $0$, so the MMS property holds automatically. Hence, for the rest of the section we assume $p=\omega\left({\frac{\log n}{n}}\right)$. We split the proof into dense and sparse regimes, first treating $p=\omega\left({\sqrt{\frac{\log n}{n}}}\right)$ and then the range $p=\omega\left({\frac{\log n}{n}}\right)$ with $p\leq n^{-1/3}$.
\subsection{Dense Regime}
For the reader's convenience, let us recall the following standard bound on the tail of a random variable.
 It will help us to show that if we take a \textit{large} subset of vertices, then with high probability, the induced subgraph $G[A]$ has more than $\delta(G)$ edges.
 \begin{thm}[Chernoff bound]
  Let $X$ be the sum of independent Bernoulli random variables (not necessarily with the same
probability).
 Let $\mu=\mathbb{E}(X)$. Then, for all $\epsilon>0$,
\[\mathbb{P}(X<(1-\epsilon)\mu)\le e^{\frac{-\epsilon^2\mu}{2}}\]
and
\[\mathbb{P}(X>(1+\epsilon)\mu)\le e^{\frac{-\epsilon^2\mu}{1+\epsilon}}\]
\end{thm}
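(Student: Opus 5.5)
The statement just before this point is the Chernoff bound for a sum of independent Bernoulli variables; I will prove it by the exponential moment method. Write $X=\sum_{i=1}^N X_i$ with $X_i\sim\mathrm{Bernoulli}(p_i)$ independent and $\mu=\sum_i p_i$.

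\textbf{Step 1: the moment generating function.} For any real $t$, independence together with $1+x\le e^x$ gives
\[
\mathbb{E}e^{tX}=\prod_i\bigl(1+p_i(e^t-1)\bigr)\le \exp\bigl(\mu(e^t-1)\bigr).
\]

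\textbf{Step 2: the upper tail.} Take $t>0$ and apply Markov's inequality to $e^{tX}$:
\[
\mathbb{P}\bigl(X>(1+\epsilon)\mu\bigr)\le \exp\bigl(\mu(e^t-1)-t(1+\epsilon)\mu\bigr).
\]
Choosing $t=\log(1+\epsilon)$ yields
\[
\mathbb{P}\bigl(X>(1+\epsilon)\mu\bigr)\le \left(\frac{e^{\epsilon}}{(1+\epsilon)^{1+\epsilon}}\right)^{\mu}.
\]

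\textbf{Step 3: the lower tail.} Apply Markov's inequality to $e^{-tX}$ with $t>0$:
\[
\mathbb{P}\bigl(X<(1-\epsilon)\mu\bigr)\le \exp\bigl(\mu(e^{-t}-1)+t(1-\epsilon)\mu\bigr).
\]
For $0<\epsilon<1$, choosing $t=-\log(1-\epsilon)$ gives the bound $\bigl(e^{-\epsilon}/(1-\epsilon)^{1-\epsilon}\bigr)^{\mu}$. For $\epsilon\ge1$ the event $\{X<(1-\epsilon)\mu\}$ is contained in $\{X<0\}$, which is empty, so there is nothing to prove.

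\textbf{Step 4: the elementary inequalities.} For the lower tail I need
\[
-\epsilon-(1-\epsilon)\log(1-\epsilon)\le -\frac{\epsilon^2}{2}\qquad\text{on }[0,1).
\]
Expanding $(1-\epsilon)\log(1-\epsilon)=-\epsilon+\sum_{k\ge2}\frac{\epsilon^k}{k(k-1)}\ge -\epsilon+\frac{\epsilon^2}{2}$ settles this. For the upper tail I need
\[
g(\epsilon):=\epsilon-(1+\epsilon)\log(1+\epsilon)+\frac{\epsilon^2}{1+\epsilon}\le 0,
\]
which is where the real work lies. I have $g(0)=0$ and
\[
g'(\epsilon)=-\log(1+\epsilon)+\frac{\epsilon(2+\epsilon)}{(1+\epsilon)^2}.
\]
This satisfies $g'(0)=0$, and
\[
g''(\epsilon)=-\frac{1}{1+\epsilon}+\frac{2}{(1+\epsilon)^3},
\]
which is positive near $0$. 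So $g$ is \emph{not} nonpositive near $0$, and the stated upper-tail bound with exponent $\frac{\epsilon^2\mu}{1+\epsilon}$ does not follow from Steps 2 and 4 as I hoped.

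Indeed, for small $\epsilon$ the optimized exponent in Step 2 is about $\epsilon^2\mu/2$, which is smaller than $\epsilon^2\mu/(1+\epsilon)$. Since the Step 2 bound is asymptotically tight in the Poisson limit, the stated upper-tail form should fail for small $\epsilon$ in that limit. The standard correct version is $e^{-\epsilon^2\mu/(2+\epsilon)}$. It follows from Steps 2 and 4 using
\[
(1+\epsilon)\log(1+\epsilon)-\epsilon\ge \frac{\epsilon^2}{2+\epsilon},
\]
which I will verify the same way: both sides vanish at $0$, and I compare derivatives, using $\log(1+\epsilon)\ge \frac{2\epsilon}{2+\epsilon}$.

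So the plan is to prove the lower tail exactly as stated, and to establish the upper tail in the form $e^{-\epsilon^2\mu/(2+\epsilon)}$. In the later applications only constant factors in the exponent should matter. The main obstacle is this upper-tail exponent: as written it appears too strong for small $\epsilon$, and I would flag it rather than force it.
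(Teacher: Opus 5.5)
The paper gives no proof of this statement. It quotes it as a standard tool, so there is no argument of the paper's to compare yours against. Your Steps 1--3 and the lower-tail half of Step 4 are the usual exponential-moment proof and are correct. This includes your observation that the case $\epsilon\ge 1$ is vacuous because $X\ge 0$.

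Your objection to the upper tail is also correct, and it can be made rigorous rather than ``should fail''. Your function $g$ is positive on an interval $(0,\epsilon^*)$ with $\epsilon^*\approx 2.16$. In the Poisson limit, the bound $\bigl(e^{\epsilon}/(1+\epsilon)^{1+\epsilon}\bigr)^{\mu}$ is sharp up to a factor polynomial in $\mu$, since $k!\le e\,k^{k+1/2}e^{-k}$.

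For a concrete counterexample, take $\epsilon=1$ and $X\sim\mathrm{Bin}(N,100/N)$, so $\mu=100$. As $N\to\infty$, $\mathbb{P}(X>200)\to\mathbb{P}(Y\ge 201)$ with $Y\sim\mathrm{Po}(100)$. Stirling gives $\mathbb{P}(Y=201)=e^{-100}100^{201}/201!\approx e^{-42.9}$, which exceeds the claimed bound $e^{-\mu/2}=e^{-50}$. So the printed exponent $\epsilon^2\mu/(1+\epsilon)$ is an error, presumably for $\epsilon^2\mu/(2+\epsilon)$.

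Your proof of that corrected version is correct. Comparing derivatives and using $\log(1+\epsilon)\ge 2\epsilon/(2+\epsilon)$ gives $(1+\epsilon)\log(1+\epsilon)-\epsilon\ge\epsilon^2/(2+\epsilon)$, since $2(2+\epsilon)\ge 4+\epsilon$.

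One refinement of your closing remark: the paper never uses the upper tail at all. It applies only the lower tail, to $|E(G[A])|$ in the dense case and to $X_S$ in both sparse regimes, and it cites the minimum-degree concentration separately. So the misstatement has no downstream effect, which is stronger than saying that only constants in the exponent matter.
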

We shall use the following generalisation of Hall's marriage theorem, whose immediate corollary we will use to show that \textit{small} subsets $A$ satisfy the matching condition from Theorem~\ref{thm:graph-characterisation}.
 \begin{thm}[Lebensold \cite{Lebensold1977}]\label{thm:lebensold}
Let \( G = (A \cup B, E) \) be a bipartite graph.
 Then \( G \) contains \( k \) edge-disjoint matchings, each of size $|A|$, if and only if the following inequality holds:
\[\forall S \subseteq A,\quad \sum_{v \in B} \min\left(k,\; |N(v) \cap S| \right) \geq k |S|\]
\end{thm}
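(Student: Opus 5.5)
We prove Lebensold's theorem by reducing it to Hall's marriage theorem on an auxiliary bipartite graph, and then using K\"onig's edge-colouring theorem to split the resulting structure into $k$ edge-disjoint matchings.

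\textbf{Necessity.} Suppose $M_1,\dots,M_k$ are edge-disjoint matchings, each saturating $A$, and fix $S\subseteq A$. Count the edges of $\bigcup_i M_i$ that have an endpoint in $S$. There are exactly $k|S|$ of them, since each matching contributes $|S|$. A vertex $v\in B$ lies on at most one edge of each $M_i$, so it lies on at most $k$ of these edges. It also lies on at most $|N(v)\cap S|$ of them, since they are distinct edges from $v$ to $S$. Summing over $v\in B$ gives the inequality.

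\textbf{Sufficiency.} Build the bipartite graph $H$ with the following parts and edges.
\begin{itemize}
\item One part is $A\times[k]$, consisting of $k$ copies $a^{(1)},\dots,a^{(k)}$ of each $a\in A$.
\item The other part is $E\cup (B\times[k])$. Each edge $e=av$ of $G$ is a vertex, and each $v\in B$ has $k$ copies $v^{(j)}$.
\item There is a single ``gadget'' trick. Each copy $a^{(i)}$ is joined to every $e\in E$ incident to $a$.
\end{itemize}
Running Hall on this graph directly is awkward, so we use a cleaner flow formulation instead.

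Form a network with source $s$, sink $t$, and the following arcs:
\begin{itemize}
\item $s\to a$ with capacity $k$, for each $a\in A$;
\item $a\to v$ with capacity $1$, for each edge $av\in E$;
\item $v\to t$ with capacity $k$, for each $v\in B$.
\end{itemize}
We claim the maximum flow equals $k|A|$. Take a cut consisting of $s$ together with $A\setminus S$ and a set $T\subseteq B$ on the source side. Its capacity is
\[
k|S| + e(A\setminus S, B\setminus T) + k|T|.
\]
Minimising over $T$ vertex by vertex, the cheapest contribution of each $v\in B$ is $\min\bigl(k,\,|N(v)\cap(A\setminus S)|\bigr)$. Rename $A\setminus S$ as $S'$; the minimum cut becomes
\[
\min_{S'\subseteq A}\Bigl(k|A\setminus S'| + \sum_{v\in B}\min\bigl(k,|N(v)\cap S'|\bigr)\Bigr) \ge k|A|,
\]
where the inequality is exactly the hypothesis. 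By max-flow min-cut and integrality, there is an integral flow of value $k|A|$. It selects an edge set $F\subseteq E$ in which every $a\in A$ has degree exactly $k$ and every $v\in B$ has degree at most $k$.

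\textbf{Splitting into matchings.} The graph $(A\cup B,F)$ is bipartite with maximum degree $k$. By K\"onig's edge-colouring theorem its edges can be properly coloured with $k$ colours, giving edge-disjoint matchings $M_1,\dots,M_k$. Each $a\in A$ has degree exactly $k$ in $F$, so it sees all $k$ colours. Therefore every $M_i$ covers $A$, i.e.\ has size $|A|$, as required.

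The main obstacle is the min-cut computation: one has to check that the optimal choice of $T$ reduces vertex by vertex to the $\min(k,\cdot)$ terms. This works because each $v\in B$ contributes independently: it pays $k$ if placed in $T$, and otherwise pays its number of edges from the source side of $A$.
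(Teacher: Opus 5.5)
Your proof is correct. The paper does not prove this statement: it quotes it from \cite{Lebensold1977} and uses it only as a black box in Corollary~\ref{cor:lebensold}, so your argument is a self-contained replacement rather than a variant of anything in the paper.

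Your route is the standard one. Necessity comes from double counting the $k|S|$ edges of $M_1\cup\dots\cup M_k$ at $S$. Sufficiency comes from max-flow min-cut followed by K\"onig's edge-colouring theorem. The cut capacity $k|S|+e(A\setminus S,B\setminus T)+k|T|$ is right, because arcs from sink-side $S$ to source-side $T$ do not count. The cut $\{s\}$ shows the flow value is exactly $k|A|$. Integrality then gives $F$ with $\deg_F(a)=k$ and $\deg_F(v)\le k$. K\"onig splits $F$ into $k$ matchings, each meeting every $a\in A$. The only blemish is the opening construction of the auxiliary graph $H$ with the unfinished ``gadget'' bullet. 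It is never used and should be deleted.

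Your proof also shows how little of the theorem the paper needs. In Corollary~\ref{cor:lebensold} one has $k=\delta-\Delta_A\ge|A|$, and every $a\in A$ has at least $k$ neighbours outside $A$. Every vertex outside $A$ trivially has at most $|A|\le k$ neighbours in $A$. So you can skip the flow step entirely. Keep any $k$ edges from each $a\in A$ to $V\setminus A$, and apply K\"onig's theorem to the resulting bipartite graph of maximum degree $k$.
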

\begin{cor}\label{cor:lebensold}
  Let $G$ be a graph with minimum degree $\delta$.
 Let us consider the subset $A\subseteq V$, such that $|A|\le\frac{\delta}{2}$.
 If $\Delta_A$ is the largest degree in the induced graph $G[A]$.
 Then there exist $\delta-\Delta_A$ edge-disjoint matchings in the bipartite graph between $A$ and $N(A)$ of size $|A|$.
 \end{cor}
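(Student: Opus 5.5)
We apply Theorem~\ref{thm:lebensold} with $k:=\delta-\Delta_A$ to the bipartite graph $H$ between $A$ and $B:=N(A)\setminus A$. Here $H$ consists of the edges of $G$ joining $A$ to vertices outside $A$. We may assume $k\ge 1$, since otherwise there is nothing to prove. The claim then reduces to checking that for every $S\subseteq A$
\[
\sum_{v\in B}\min\bigl(k,|N(v)\cap S|\bigr)\ \ge\ k|S|.
\]

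First we count edges. Every vertex $a\in S$ has degree at least $\delta$ in $G$ and at most $\Delta_A$ neighbours inside $A$, so it has at least $\delta-\Delta_A=k$ neighbours in $B$. Hence the number of $H$-edges leaving $S$ satisfies $e_H(S,B)=\sum_{v\in B}|N(v)\cap S|\ge k|S|$. If no vertex of $B$ had more than $k$ neighbours in $S$, the truncation would do nothing and we would be done. The difficulty is therefore the truncation at $k$: vertices of $B$ with many neighbours in $S$ are counted only $k$ times.

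To handle truncation we use $|S|\le|A|\le\delta/2$. For each $a\in S$, let $b_a:=|N(a)\cap B|\ge\delta-\Delta_A$. Double count the left-hand side as
\[
\sum_{v\in B}\min\bigl(k,|N(v)\cap S|\bigr)\ \ge\ \sum_{v\in B}\frac{k}{|S|}\,\bigl|N(v)\cap S\bigr|\cdot\mathbf 1,
\]
which uses $\min(k,x)\ge \frac{k}{|S|}x$ for $0\le x\le |S|$; this holds when $k\le|S|$. Summing gives at least $\frac{k}{|S|}\,e_H(S,B)\ge \frac{k}{|S|}\,k|S|$, which is insufficient in general. So we split the argument instead.

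\emph{Case $|S|\le k$.} Here $\min(k,|N(v)\cap S|)=|N(v)\cap S|$, because $|N(v)\cap S|\le|S|\le k$. The sum equals $e_H(S,B)\ge k|S|$ and we are done.

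\emph{Case $|S|>k$.} Here we use the stronger bound $e_H(S,B)\ge|S|(\delta-\Delta_S)$, where $\Delta_S<|S|$, so each $a$ has at least $\delta-|S|+1$ neighbours in $B$. We need
\[
\sum_{v\in B}\min\bigl(k,|N(v)\cap S|\bigr)\ge k|S|.
\]
A vertex $v$ with $|N(v)\cap S|=x\le|S|$ contributes $\min(k,x)\ge x\cdot k/|S|$. Hence the sum is at least $\frac{k}{|S|}(\delta-|S|+1)|S|=k(\delta-|S|+1)$, and since $|S|\le\delta/2$ this is at least $k(\delta/2+1)> k|S|$.

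The main obstacle is exactly this truncation issue. The case split at $|S|$ versus $k$, combined with the hypothesis $|A|\le\delta/2$ (which guarantees $\delta-|S|\ge |S|$), resolves it. It remains to check that the edge count in the second case uses only degree information inside $S\subseteq A$, which is where $|S|$ rather than $\Delta_A$ enters. Having verified Lebensold's condition for all $S$, Theorem~\ref{thm:lebensold} yields $\delta-\Delta_A$ edge-disjoint matchings of size $|A|$ between $A$ and $N(A)$.
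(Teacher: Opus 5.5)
Your Case $|S|\le k$ is exactly the paper's proof. You apply Theorem~\ref{thm:lebensold} with $k=\delta-\Delta_A$, the truncation is inactive, and $e_H(S,B)\ge k|S|$ follows because each $a\in S$ has at least $\delta-\Delta_A$ neighbours outside $A$.

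What you missed is that the other case never occurs, and this is exactly where the hypothesis $|A|\le\delta/2$ is used. Since $G[A]$ is a simple graph on $|A|$ vertices, $\Delta_A\le|A|-1$. Hence
\[
k=\delta-\Delta_A\ge\delta-|A|+1\ge\frac{\delta}{2}+1>|A|\ge|S|
\]
for every $S\subseteq A$. So the ``truncation obstacle'' does not exist. The paper's proof is this one line together with your Case 1.

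As written, your Case $|S|>k$ is also incorrect on its own terms. You claim, via $\Delta_S<|S|$, that each $a\in S$ has at least $\delta-|S|+1$ neighbours in $B$. This ignores the neighbours of $a$ in $A\setminus S$, which lie neither in $S$ nor in $B=N(A)\setminus A$. The number of $B$-neighbours of $a$ is $\deg(a)-\deg_{G[A]}(a)$, which is bounded below only by $\delta-\Delta_A\ge\delta-|A|+1$, not by $\delta-\Delta_S$. The ``remains to check'' at the end of your proposal is exactly this point, and the check fails.

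Replacing $|S|$ by $|A|$ there would repair the computation, giving $k(\delta-|A|+1)>k|S|$. The cleaner fix is to delete the abandoned averaging paragraph and Case 2, and simply record $k>|A|$.
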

\begin{proof}
  The minimum degree in the bipartite graph between $A$ and $N(A)$ is at least \[\delta-\Delta_A\ge\delta-|A|+1\ge\frac{\delta}{2}+1\ge|A|,\]
 where we used the fact that at most $|A|-1$ edges adjacent to $v$ could be induced in $G[A]$.
 Now we apply Theorem~\ref{thm:lebensold} for $k=\delta-\Delta_A$. For each $S\subseteq A$ and $v\in N(A)$, the following holds $|N(v) \cap S|\le|S|\le|A|\le k$, which implies $\min(k,|N(v) \cap S|)=|N(v) \cap S|$.
 The value $|N(v) \cap S|$ is equal to the degree of $v$ in the induced bipartite subgraph on vertices $S\cup N(A)$, which means
 \[\sum_{v\in N(A)}|N(v) \cap S|=\sum_{v\in N(A)}\deg(v)=\sum_{u\in S}\deg(u)\ge(\delta-\Delta_A)|S|\]
for all $S\subseteq A$. This gives the desired $(\delta-\Delta_A)$ edge-disjoint matchings.
 \end{proof}
A direct verification shows that having $(\delta-\Delta_A)$ edge-disjoint matchings is a stronger condition than having a matching after the deletion of $(\delta-\Delta_A-1)$ edges.
Now we state following well known estimate for the concentration of the minimal degree in $\textbf{G}(n,p)$.
 \begin{lemma}\label{lem:min-degree}[\cite{FriezeKaronski2015} (Theorem 3.5)]
  Given a real number $\epsilon>0$, then in $\mathbf{G}(n,p)$ with $p=\omega(\frac{\log n}{n})$, the minimum degree $\delta$ is in the interval $[(1-\epsilon)(n-1)p,(1+\epsilon)(n-1)p)]$, with high probability.
 Moreover, \[\mathbb{P}(\delta\notin[(1-\epsilon)(n-1)p,(1+\epsilon)(n-1)p])\le ne^{-Kpn}\] for some constant $K>0$ depending on $\epsilon$ .
\end{lemma}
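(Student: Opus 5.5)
The approach is to prove the concentration of each vertex degree by Chernoff and then take a union bound over the $n$ vertices. In $\mathbf{G}(n,p)$ the degree $\deg(v)$ of a fixed vertex $v$ is a sum of $n-1$ independent Bernoulli$(p)$ indicators, one for each potential edge $vu$ with $u\neq v$. Hence $\deg(v)\sim \mathrm{Bin}(n-1,p)$ with mean $\mu=(n-1)p$. The minimum degree $\delta$ leaves the interval $[(1-\epsilon)\mu,(1+\epsilon)\mu]$ only if some vertex has degree outside it. It is therefore enough to bound $\mathbb{P}(\deg(v)\notin[(1-\epsilon)\mu,(1+\epsilon)\mu])$ for a single vertex and multiply by $n$.

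By the Chernoff bound stated above, for a single vertex
\[
\mathbb{P}\bigl(\deg(v)<(1-\epsilon)\mu\bigr)\le e^{-\epsilon^2\mu/2},\qquad
\mathbb{P}\bigl(\deg(v)>(1+\epsilon)\mu\bigr)\le e^{-\epsilon^2\mu/(1+\epsilon)}.
\]
Both exponents are at least $\frac{\epsilon^2}{2(1+\epsilon)}\mu$. Since $\mu=(n-1)p\ge pn/2$ for $n\ge2$, each tail is at most $e^{-K'pn}$ with $K'=\frac{\epsilon^2}{4(1+\epsilon)}$. The union bound over $n$ vertices and two tails gives
\[
\mathbb{P}\bigl(\delta\notin[(1-\epsilon)(n-1)p,(1+\epsilon)(n-1)p]\bigr)\le 2ne^{-K'pn}.
\]
To remove the factor $2$, note that $pn\to\infty$, so for large $n$ we have $2e^{-K'pn}\le e^{-Kpn}$ with $K=K'/2$. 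This gives the stated bound $ne^{-Kpn}$. For small $n$ the bound can be made trivial by adjusting constants, or one can simply note that the statement is asymptotic.

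The only point needing care is converting this bound into a with-high-probability statement, and this is where $p=\omega(\log n/n)$ is used. Write $pn=\omega(n)\log n$ with $\omega(n)\to\infty$. Then $ne^{-Kpn}=\exp\bigl(\log n\,(1-K\omega(n))\bigr)\to0$. So the union bound beats the factor $n$ precisely because $pn\gg\log n$. No other step is an obstacle: the independence of edges makes each degree an exact binomial, so Chernoff applies directly. Note that the lower tail alone already controls $\delta$ from below. The upper tail is needed because $\delta\le\deg(v)$ for every $v$; in fact a single vertex would suffice for the upper bound, but the union bound handles both tails uniformly.
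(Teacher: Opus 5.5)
The paper gives no proof of this lemma; it only cites Frieze--Karo\'nski (Theorem 3.5). Your argument is the standard proof of that result and is correct: each degree is $\mathrm{Bin}(n-1,p)$, Chernoff controls both tails, a union bound over the $n$ vertices handles the minimum, and $pn/\log n\to\infty$ beats the factor $n$.

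One harmless caveat concerns the upper-tail bound $e^{-\epsilon^2\mu/(1+\epsilon)}$ that you quote from the paper. It is actually too strong: for $\epsilon=1$ it would give $e^{-\mu/2}$, while Poisson-like binomial tails decay only like $e^{-(2\log 2-1)\mu}$, and the correct general form is $e^{-\epsilon^2\mu/(2+\epsilon)}$. Since you immediately weaken both exponents to $\frac{\epsilon^2}{2(1+\epsilon)}\mu\le\frac{\epsilon^2}{2+\epsilon}\mu$, your final bound $ne^{-Kpn}$ with $K=\frac{\epsilon^2}{8(1+\epsilon)}$ is unaffected.
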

We can finally prove Theorem~\ref{thm:random-graphs}.
 \begin{proof}[Proof of Theorem~\ref{thm:random-graphs} (Dense Case)]
Let $\delta(G)$ be the minimum degree in the sampled graph $G\sim G(n,p)$, and fix $\epsilon = 0.01$. We define a failure event $\text{Fail}_\delta$ as the case when 
\[\delta(G) \notin \Big[ (1-\epsilon)(n-1)p, (1+\epsilon)(n-1)p \Big].\]

We will verify that for each subset $A\subseteq V$, either $|E(G[A])|\ge (1+\epsilon)p(n-1)$ or there are at least $\delta(G)-|E(G[A])|$ edge-disjoint matchings between $A$ and $N(A)$. This ensures that for every $E' \subseteq E(G)$ such that $|E'|\le\delta(G)-1$, the given set $A$ is either not independent in $G-E'$ or admits a matching covering it. By Theorem~\ref{thm:graph-characterisation}, this implies that $G$ has the MMS property.
 
We classify a subset $A\subseteq V$ as \textit{large} if $|A|\ge \frac{(1-\epsilon)}{2}(n-1)p$ and \textit{small} otherwise. For a large set, the expected number of induced edges satisfies: 
\begin{align*}
  \mathbb{E}\big(|E(G[A])|\big) = p\binom{|A|}{2} &\ge (1+\epsilon)p(n-1)\\
  (1-o(1))\frac{|A|^2}{2} &\ge (1+\epsilon)(n-1)\\
  (1-o(1))\cdot\frac{p^2(n-1)(1-\epsilon)^2}{8(1+\epsilon)} &\ge 1.
\end{align*}
Evaluating the constant term for $\epsilon = 0.01$, we have $\frac{0.99^2}{8.08} \approx 0.121 > 0$. Thus, the last inequality holds for sufficiently large $n$ due to the fact that $p=\omega\left(\sqrt{\frac{\log n}{n}}\right)$. 

We define a failure event $\text{Fail}_A$ for a large set $A$ as occurring when $|E(G[A])| < (1+\epsilon)p(n-1)$. Note that, by the same calculation as above, the inequality
\[(1-\epsilon)p\binom{|A|}{2} \ge (1+\epsilon)p(n-1)\] 
holds for any large set $A$. We now apply the Chernoff bound for any large set $A$ to obtain the estimate:
\[\mathbb{P}(\text{Fail}_A) \le \mathbb{P}\left(|E(G[A])| < (1-\epsilon)p\binom{|A|}{2}\right) \le e^{-Cp^3n^2}\]
for some constant $C>0$.

For small sets $A$, we show that there are at least $\delta(G)-|E(G[A])|$ edge-disjoint matchings between $A$ and $N(A)$. We proceed by picking a vertex $v\in A$ with the maximum degree $\Delta_A$ in the induced subgraph $G[A]$. This selection guarantees at least $\Delta_A$ induced edges in $G[A]$. Assuming $\text{Fail}_\delta$ does not occur, the definition of a small set ensures $|A| < \frac{(1-\epsilon)}{2}(n-1)p \le \frac{\delta(G)}{2}$. Therefore, we can apply Corollary~\ref{cor:lebensold} to obtain at least $\delta(G)-\Delta_A \ge \delta(G)-|E(G[A])|$ edge-disjoint matchings, as required. Observe that this portion of the argument is purely deterministic. 

To conclude the proof, we show that the probability of any failure event occurring tends to zero. Observe that if $\text{Fail}_A$ occurs for a large set $A$, then the event $\text{Fail}_{A'}$ must also occur for all large subsets $A'\subseteq A$. Therefore, it suffices to only consider the failure events $\text{Fail}_A$ for sets exactly of size $|A|=\left\lceil \frac{(1-\epsilon)}{2}(n-1)p \right\rceil= m$. To limit the number of such sets, we use the estimate $\binom{n}{m} < n^{pn} = e^{pn \log n}$. 

Applying the union bound, we obtain:
\begin{align*}
  \mathbb{P}\left(\big(\bigcup_{|A|=m}\text{Fail}_A\big)\cup \text{Fail}_{\delta}\right) &\le \binom{n}{m}\mathbb{P}(\text{Fail}_A) + \mathbb{P}(\text{Fail}_\delta) \\
  &\le e^{pn \log n - C p^3n^2} + ne^{-Kpn} \\
  &\le e^{-pn \cdot\omega(\log n)} + ne^{-Kpn},
\end{align*}
where we used $p=\omega\left(\sqrt{\frac{\log n}{n}}\right)$. As $n\rightarrow\infty$, the right-hand side tends to $0$.
 \end{proof}
\subsection{Sparse Regime}
We now consider the range $\omega\left(\frac{\log n}{n}\right) \le p \le n^{-1/3}$. In addition to the lemmas used in the dense case, we need the following standard bound on the independence number.

\begin{lemma} \label{lem:independence-number}[\cite{FriezeKaronski2015} (Theorem 7.3)]
Let $G \sim \mathbf{G}(n,p)$ with $p = \omega\left(\frac{\log n}{n}\right)$. Then, with high probability, the independence number $\alpha(G)$ satisfies $\alpha(G) \le \frac{4 \log n}{p}$.
\end{lemma}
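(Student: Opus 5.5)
The plan is a first-moment (union bound) argument on independent sets of a single critical size. Set $k=\left\lceil \frac{4\log n}{p}\right\rceil$. Since $p=\omega\left(\frac{\log n}{n}\right)$, we have $k=o(n)$, so in particular $k\le n$ for large $n$. Also $k\to\infty$, because $p\le 1$.

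Let $X_k$ be the number of independent sets of size exactly $k$ in $G\sim\mathbf{G}(n,p)$. Independence is hereditary: any independent set of size at least $k$ contains one of size exactly $k$. Hence
\[
\mathbb{P}\left(\alpha(G)\ge k\right)=\mathbb{P}(X_k\ge 1)\le \mathbb{E}(X_k)
\]
by Markov's inequality. It therefore suffices to show $\mathbb{E}(X_k)\to 0$. This already gives $\alpha(G)<k$, i.e.\ $\alpha(G)\le \frac{4\log n}{p}$ up to the harmless rounding in $k$.

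To bound the expectation, I would write
\[
\mathbb{E}(X_k)=\binom{n}{k}(1-p)^{\binom{k}{2}}\le \left(\frac{en}{k}\right)^k e^{-p\binom{k}{2}}=\left(\frac{en}{k}\,e^{-p(k-1)/2}\right)^k .
\]
The key step is estimating the base. Since $k\ge \frac{4\log n}{p}$,
\[
\frac{p(k-1)}{2}\ge 2\log n-\frac{p}{2}\ge 2\log n-\frac12,
\]
so $e^{-p(k-1)/2}\le e^{1/2}n^{-2}$. Together with $en/k\le en$, the base is at most $e^{3/2}/n$. Consequently
\[
\mathbb{E}(X_k)\le \left(\frac{e^{3/2}}{n}\right)^k\to 0,
\]
since the base tends to $0$ and $k\ge 1$.

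There is no real obstacle; the only points needing care are bookkeeping. First, the ceiling in $k$ must be handled, which is done by using $k-1\ge \frac{4\log n}{p}-1$. Second, one must check $k\le n$ so that the binomial estimate is meaningful, and this is exactly where the hypothesis $p=\omega\left(\frac{\log n}{n}\right)$ is used. The constant $4$ is generous; any constant larger than $2$ would do with the same computation.
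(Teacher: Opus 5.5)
Your first-moment argument is correct and complete. With $k=\lceil 4\log n/p\rceil$ you get $\mathbb{E}(X_k)\le (e^{3/2}/n)^k\to 0$, hence $\alpha(G)\le k-1<4\log n/p$ with high probability, so the rounding in $k$ costs nothing.

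The paper gives no proof of its own and simply cites Frieze--Karo\'nski; the standard upper bounds on $\alpha(\mathbf{G}(n,p))$ come from this same union bound over $k$-sets, only with sharper constants. Your route is therefore essentially the standard one.

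The only inaccuracy is your remark that $p=\omega(\log n/n)$ is needed to ensure $k\le n$. If $k>n$ then $\binom{n}{k}=0$, or equivalently $\alpha(G)\le n<k$ trivially. So your computation proves the bound for every $p\in(0,1]$, and the hypothesis is never used.
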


Unlike in the dense case, the sparse argument uses the structural criterion in Theorem~\ref{thm:graph-characterisation} more directly. Given a set $E'\subseteq E$ with $|E'|\leq\delta(G)-1$, we consider independent sets in $G-E'$, bound their size, and use expansion in $\mathbf{G}(n,p)$ to verify Hall's condition.
\begin{proof}[Proof of Theorem~\ref{thm:random-graphs} (Sparse Case)]
Let $\epsilon = 0.01$. We condition on two properties that hold with high probability for $G \sim \mathbf{G}(n,p)$ in this regime. First, by Lemma~\ref{lem:min-degree}, the minimum degree satisfies $\delta(G) \in [(1-\epsilon)np, (1+\epsilon)np]$. Second, the independence number is bounded by $\alpha(G) \le \frac{4 \log n}{p}$ by Lemma~\ref{lem:independence-number}.

Suppose for contradiction that $G$ lacks the MMS property. By Theorem~\ref{thm:graph-characterisation}, there exists a subset of edges $E' \subseteq E(G)$ with $|E'| \le \delta(G) - 1$, and an independent set $A$ in $G - E'$ that admits no covering matching. By Hall's condition, there must exist a subset $S \subseteq A$ such that its external neighborhood in the deleted graph satisfies $|N_{G-E'}(S)| \le |S| - 1$, where $N_{G-E'}(S)$ is its open neighbourhood, which in this case coincides with its external neighbourhood.

Relating this back to the original graph $G$, the neighborhood can increase by at most the number of deleted edges $|E'|$. Thus, we have
\begin{equation}
  |N_G(S)| \le |N_{G-E'}(S)| + |E'| \le |S| - 1 + \delta(G) - 1 < |S| + (1+\epsilon)np. \label{eq:neighborhood-bound}
\end{equation}
Let $s = |S|$ and denote the number of external neighbors by $X_S = |N_G(S) \setminus S|$. Since $X_S \le |N_G(S)|$, the failure condition \eqref{eq:neighborhood-bound} requires $X_S < s + (1+\epsilon)np$. We may assume that $s\ge 2$. Indeed, if $S=\{v\}$, then $v$ has at least $\delta(G)$ neighbours in $G$. Since at most $\delta(G)-1$ edges are deleted, at least one edge incident to $v$ remains in $G-E'$. Thus $|N_{G-E'}(S)|\ge 1$, contradicting the Hall-violating inequality $|N_{G-E'}(S)|\le |S|-1=0$.

Next, we establish an absolute upper bound on $s$. Because $A$ is an independent set in $G - E'$, any edge connecting two vertices in $A$ within the original graph $G$ must belong to $E'$. By taking $A$ and removing at most one endpoint for every internal edge, we obtain a strict independent set $I$ in $G$. Thus, $|I| \ge |A| - |E'|$. Since $|I| \le \alpha(G)$, we have:
\begin{equation*}
  |A| \le \alpha(G) + |E'| \le \frac{4 \log n}{p} + (1+\epsilon)np.
\end{equation*}
Because $p = \omega\left(\frac{\log n}{n}\right)$, the term $\frac{4\log n}{p} = o(n)$ and $p\le n^{-1/3}$ gives us $(1+\epsilon)pn=o(n)$. Thus, we obtain the asymptotic upper bound $|A|= o(n)$ for large $n$. Since $S \subseteq A$, it suffices to consider sizes $2 \le s \le s_{max}=o(n)$, where $s_{max}$ is the size of the largest independent set in $G-E'$.
For a fixed set $S$ of size $s$, the probability that a vertex $v \notin S$ connects to $S$ is $q = 1 - (1-p)^s$. We divide our analysis into two regimes based on the size of $sp$.

\textbf{Regime 1 ($2 \le s \le 0.1/p$):} Since $sp \le 0.1$, we can use the Taylor expansion bound $q \ge sp(1 - \frac{sp}{2})$, which evaluates to $q \ge 0.95sp$. The expected number of external neighbors is $\mu = \mathbb{E}[X_S] = (n-s)q$. Because $s= o(n)$, we have $n-s \ge 0.99n$, yielding $\mu \ge 0.94snp$. 

We apply the Chernoff bound with the failure threshold $K = s + (1+\epsilon)np$. The relative deviation $\eta = 1 - \frac{K}{\mu}$ is bounded by considering:
\begin{equation*}
  \frac{K}{\mu} \le \frac{s + (1+\epsilon)np}{0.94 snp} = \frac{1}{0.94np} + \frac{1+\epsilon}{0.94s}.
\end{equation*}
For large $n$, $\frac{1}{0.94np} < 0.01$. Since $s \ge 2$, $\frac{1+\epsilon}{0.94s} \le 0.537$. Thus $\frac{K}{\mu} \le 0.55$, meaning $\eta \ge 0.45$. The Chernoff bound yields:
\begin{equation*}
  \mathbb{P}(X_S < s + (1+\epsilon)np) \le \exp\left(-\frac{0.45^2}{2}(0.94snp)\right) \le \exp(-0.09 snp).
\end{equation*}
To ensure no such set $S$ exists, we apply the union bound over all subsets in this regime:
\begin{equation*}
  \mathbb{P}(\text{Fail}_1) \le \sum_{s=2}^{\lfloor 0.1/p \rfloor} \binom{n}{s} \exp(-0.09 snp) \le \sum_{s=2}^{\infty} \exp\Big(s (\log n - 0.09 np)\Big).
\end{equation*}
Because $p = \omega\left(\frac{\log n}{n}\right)$, we have $0.09np \ge 2 \log n$ for large $n$. The sum is therefore bounded by a convergent geometric series $O(n^{-2}) = o(1)$.

\textbf{Regime 2 ($0.1/p < s \le s_{max}$):} In this regime, the neighborhood expands rapidly. We have $q = 1 - (1-p)^s \ge 1 - e^{-sp} \ge 1 - e^{-0.1} \ge 0.095$. The expectation becomes $\mu \ge (0.99n)(0.095) \ge 0.094n$.

The failure threshold is bounded by $K = s + (1+\epsilon)np$. Since $\mu = \Theta(n)$ and $K = o(n)$, the relative deviation $\eta = 1 - \frac{K}{\mu} \to 1$. Using $\eta \ge 0.9$, the Chernoff bound yields:
\begin{equation*}
  \mathbb{P}(X_S < s + (1+\epsilon)np) \le \exp\left(-\frac{0.9^2}{2} (0.094n)\right) \le \exp(-0.03n).
\end{equation*}
We union bound over all possible sets of size $s \le s_{max}$. Utilizing the standard estimate $\binom{n}{k} \le \left(\frac{en}{k}\right)^k$, the total number of sets is bounded by:
\begin{equation*}
  \sum_{s=1}^{s_{max}} \binom{n}{s} \le s_{max} \binom{n}{s_{max}} \le \exp\left(s_{max} \log \left(\frac{en}{s_{max}}\right)\right).
\end{equation*}
Since $s_{max}=o(n)$ then the term $s_{max}\log(\frac{ne}{s_{max}})=o(n)$. The final union bound becomes:
\begin{equation*}
  \mathbb{P}(\text{Fail}_2) \le \exp\Big(o(n) - 0.03n \Big)=o(1).
\end{equation*}

Because the failure probabilities in both regimes vanish as $n \to \infty$, Hall's condition holds globally in the graph $G-E'$. Consequently, $\mathbf{G}(n,p)$ possesses the MMS property with high probability.
\end{proof}
Another natural extremal question to ask is whether there exists a function $d(n)$ such that each regular graph on $n$ vertices  with a degree of at least $d(n)$ has the MMS property.
 This can be seen as the deterministic counterpart to $\mathbf{G}(n,p)$, that will be discussed in the next section. In the following proposition, we present a related result.
 \begin{prop}\label{prop:extremal-degree}
  For $n\ge8$ and $d\ge \lceil\frac{n}{2}\rceil+2$, every admissible $d$-regular graph on $n$-vertices has the MMS property.
 Furthermore, for every $k$, there exists a $2k+2$-regular graph on $4k+1$-vertices without the MMS property.
 \end{prop}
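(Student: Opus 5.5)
For the first part I would verify the criterion of Theorem~\ref{thm:graph-characterisation} directly by a double-counting argument. Fix a $d$-regular $G$ on $n$ vertices with $d\ge\lceil n/2\rceil+2$ (admissible meaning $nd$ even), a set $E'\subseteq E$ with $|E'|\le d-1$, and an independent set $A$ of $G-E'$. Suppose Hall's condition fails. Then there is $S\subseteq A$ with $N:=N_{G-E'}(S)$ satisfying $|N|\le |S|-1$; since $A$ is independent in $G-E'$, we have $N\cap S=\emptyset$.

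Two counts give the needed bounds. First, count the $G-E'$ edges between $S$ and $N$. Every $G$-edge at $S$ lies either in $E'$ or in this set, and an edge of $E'$ is counted at most twice. This gives
\[ d|N|\;\ge\; e_{G-E'}(S,N)\;\ge\; d|S|-2|E'|,\]
so $|N|\ge |S|-2+2/d$, i.e.\ $|N|\ge |S|-1$. Hence we are in the tight case $|N|=|S|-1$, and then $2|E'|\ge d$. Second, consider a vertex $v\in S$ incident to the fewest edges of $E'$. It meets at most $2|E'|/|S|\le 2(d-1)/|S|$ of them, so it keeps at least $d-2(d-1)/|S|$ neighbours in $N$. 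This gives
\[|S|-1\ \ge\ d-\frac{2(d-1)}{|S|},\]
which forces $|S|\ge d-1$ once $|S|\ge 3$. The cases $|S|\le 2$ I would check by hand: a vertex of $S$ retains at least $d-|E'|\ge 1$ neighbours, and two vertices share at most $d-1$ deleted edges, so $|N|\ge 2>|S|-1$ for $d$ large. Finally, $S$ and $N$ are disjoint, so $n\ge |S|+|N|=2|S|-1\ge 2d-3\ge 2\lceil n/2\rceil+1>n$, a contradiction. The main obstacle is making the constants close exactly at the threshold $\lceil n/2\rceil+2$ with $n\ge 8$. If the crude bound $|S|\ge d-1$ is off by one, I would sharpen it by noting that equality in the first count forces every vertex of $N$ to have all $d$ of its edges into $S$ and every edge of $E'$ to lie inside $S$; that structure can then be ruled out directly.

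For the second part I would give an explicit construction and certify failure via Theorem~\ref{thm:graph-characterisation}. Take $V=A\cup B$ with $|A|=2k+1$ and $|B|=2k$. Join every vertex of $A$ to every vertex of $B$, put a perfect matching inside $B$ (possible since $|B|$ is even), and put a Hamilton cycle $C_{2k+1}$ inside $A$. Then each vertex of $B$ has degree $(2k+1)+1=2k+2$ and each vertex of $A$ has degree $2k+2=2k+2$, so $G$ is $(2k+2)$-regular on $4k+1$ vertices.

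Now let $E'=E(C_{2k+1})$, which has $2k+1=\delta(G)-1$ edges. In $G-E'$ the set $A$ is independent and $N(A)=B$ with $|B|<|A|$, so no matching covers $A$. By Theorem~\ref{thm:graph-characterisation}, $G$ lacks the MMS property.
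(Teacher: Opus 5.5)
Your proof is correct. The second part is exactly the paper's construction. The first part uses the same two counting inequalities as the paper but organises them differently.

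The paper proceeds in three steps:
\begin{itemize}
\item it bounds $|A|\le\lceil n/2\rceil$ by counting edges inside $A$;
\item it disposes of $|A|\le d/2$ with Corollary~\ref{cor:lebensold}, i.e.\ Lebensold's theorem;
\item for the remaining range it uses $|E'|\ge\frac{|A|}{2}(d-|A|+1)$, concavity, and substitution of the endpoints $d/2$ and $\lceil n/2\rceil$.
\end{itemize}

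You instead work with the Hall violator $S$ and $N=N_{G-E'}(S)$ from the start. Your first count is the localised form of the paper's bound on $|A|$ and gives $|N|=|S|-1$. Your second count is the same inequality $|S|(d-|S|+1)\le 2(d-1)$, which you solve exactly as $(|S|-2)\bigl(|S|-(d-1)\bigr)\ge 0$. This replaces Lebensold's theorem by the trivial cases $|S|\le 2$, and replaces the endpoint computation by $n\ge |S|+|N|\ge 2d-3$.

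What your route buys:
\begin{itemize}
\item It is fully elementary and never has to pass between $A$ and a Hall-violating subset of it.
\item It never uses $n\ge 8$; only $d\ge 3$ is needed, and that is automatic here.
\item It is tight. For odd $n$ and $d=\lceil n/2\rceil+1$ one gets $2d-3=n$, which is exactly the parameter pair of your construction in the second part. So the fallback you mention for being ``off by one'' is not needed.
\end{itemize}
The paper's route, in exchange, recycles Corollary~\ref{cor:lebensold}, which it needs anyway for the random-graph section.

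The one step worth stating precisely is $S=\{u,v\}$. Since $|E'|\le d-1$ and only the edge $uv$ can be incident to $S$ twice, at most $d$ of the $2d$ edge-incidences at $S$ are deleted. So at least $d$ edges of $G-E'$ go from $S$ to $N$. If $|N|\le 1$, at most two such edges exist, which is impossible for $d\ge 3$.
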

\begin{proof}
  Let us start with the upper bound.
 Set $d\ge\lceil\frac{n}{2}\rceil+2$ and let $G$ be a $d$-regular graph on $n$ vertices.
 Suppose for contradiction that there exist $E'$ and $A$ as in Theorem~\ref{thm:graph-characterisation}.
 First, observe that $|A|-|V\setminus A|<2$ otherwise, there are at least $d$-edges in $E[A]$. Therefore, $|A|\le\lceil\frac{n}{2}\rceil$.
 On the other hand, if $|A|\le\frac{d}{2}$, the result follows by Corollary~\ref{cor:lebensold}.
 Since $A$ does not have a matching covering it, we can assume that $\deg(v)\le|A|-1$ for $v\in A$ otherwise, we take the subset $S\subseteq A$ violating Hall's condition.
 For each such vertex, we delete at least $(d-|A|+1)$, and each edge could be counted at most twice, so 
  \[d-1\ge|E'|\ge \frac{|A|}{2}(d-|A|+1),\]
 where the minimum is achieved at the endpoints, that is, $|A|\in\{\frac{d}{2},\lceil\frac{n}{2}\rceil\}$.
 Substituting these endpoint values into the inequality gives a contradiction.
 \noindent For the lower bound construction, we take the following graphs $H=C_{2k+1}$ and $F=\{\text{matching on $2k$ vertices\}}$ and construct a complete bipartite graph between the vertices of $F$ and $H$, preserving the original edges to form a graph $G$.
 A direct verification shows that $G$ is $2k+2$-regular. Moreover, if we remove all $2k+1$ edges of the graph $H$, then its vertices form an independent set in $G$ without matching covering them since $|N(H)|=|V(F)|<|H|$, and thus the result follows by Theorem~\ref{thm:graph-characterisation}.
 \end{proof}

\section{Higher uniformities}\label{sec:high-uniformities}
In this section, we will build upon the established results and constructions for $2$-uniform graphs in the previous sections and apply them to higher uniformities.
 First, in the spirit of Theorem~\ref{thm:graph-characterisation}, we extend the sufficient condition for the MMS property to higher uniformities.
 For this purpose, we slightly modify the notion of a matching.
 \begin{defn}
Given a hypergraph $H=(V,E)$ and a set $A\subseteq V$, a \textit{pseudo-matching} saturating $A$ is a set $M\subseteq E$ such that any two distinct edges $e,f\in M$ satisfy $e\cap f\subseteq A$, and
\(
A\subseteq \bigcup_{e\in M} e.
\)
 \end{defn}
In other words, each vertex of $A$ is covered by the edges of the pseudo-matching at least once, and other vertices are covered at most once. We now state the sufficient condition, whose proof is analogous to the $2$-uniform case.
 \begin{thm}\label{thm:pseudo-matching-sufficient}
Let \(H=(V,E)\) be a \(k\)-uniform hypergraph with minimum degree \(\delta\).
 Suppose that for every set \(E'\subseteq E\) with \(|E'|\le \delta-1\), every independent set \(A\) in \(H-E'\) admits a pseudo-matching saturating \(A\) in \(H-E'\).
 Then \(H\) has the MMS property.
\end{thm}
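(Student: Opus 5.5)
The plan is to follow the converse direction of the proof of Theorem~\ref{thm:graph-characterisation}, with pseudo-matchings taking the place of matchings. Fix a weighting $f:V\to\mathbb{R}$ with $\sum_{v\in V}f(v)\ge 0$. Put $A:=\{v\in V: f(v)\ge 0\}$, and write $f(e):=\sum_{v\in e}f(v)$ for an edge $e$. Note that $A\neq\emptyset$, since otherwise the total sum would be negative. Every edge $e\subseteq A$ is nonnegative. So if $H[A]$ contains at least $\delta$ edges we are done, and we may assume that $E_0:=\{e\in E: e\subseteq A\}$ satisfies $|E_0|\le\delta-1$.

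The key step is a single averaging inequality. Let $E'\supseteq E_0$ with $|E'|\le\delta-1$. Every edge contained in $A$ has been removed, so $A$ is independent in $H-E'$, where independent means that no edge lies inside $A$. By hypothesis there is a pseudo-matching $M$ saturating $A$ in $H-E'$. I will show
\[
\sum_{e\in M}f(e)\;\ge\;\sum_{v\in A}f(v)+\sum_{v\in V\setminus A}f(v)\;=\;\sum_{v\in V}f(v)\;\ge\;0 .
\]
This holds for two reasons. First, each vertex of $A$ is covered at least once and has nonnegative weight, so its total contribution is at least $f(v)$. Second, the edges of $M$ pairwise intersect only inside $A$, so each vertex outside $A$ is covered at most once; since its weight is negative, its contribution is at least $f(v)$. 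Also $M\neq\emptyset$ because $A\neq\emptyset$. Hence some edge $e\in M$ satisfies $f(e)\ge 0$, and $e\notin E'$ because $M\subseteq E\setminus E'$.

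Next comes the iteration. Start with $E'=E_0$. Whenever $|E'|\le\delta-1$, apply the previous step to obtain a new nonnegative edge $e\notin E'$, and replace $E'$ by $E'\cup\{e\}$. Enlarging $E'$ only removes more edges, so $A$ stays independent in $H-E'$. Also $E'$ always stays inside the set of nonnegative edges. The process continues until $|E'|=\delta$, which produces $\delta$ distinct nonnegative edges and hence the MMS property.

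I do not expect a serious obstacle. The only point that needs care is the middle inequality: one must use that vertices outside $A$ are covered at most once and have strictly negative weight, while vertices of $A$ may be covered several times harmlessly because their weights are nonnegative. This is exactly why the definition of a pseudo-matching allows overlaps only inside $A$.
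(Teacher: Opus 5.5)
Your proposal is correct and follows the paper's proof essentially step for step. It uses the same set $A$ of nonnegative vertices, the same averaging inequality over a pseudo-matching in $H-E'$ (with vertices outside $A$ covered at most once), and the same iteration that adds one new nonnegative edge at a time until $\delta$ are found; your remark that $A\neq\emptyset$, so that $M\neq\emptyset$, makes explicit a point the paper leaves implicit.
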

\begin{proof}
   Let \(f:V(H)\to \mathbb R\) satisfy
\(
\sum_{v\in V(H)} f(v)\ge 0,
\)
and set
\(
A=\{v\in V(H): f(v)\ge 0\}.
\)
If \(|E(H[A])|\ge \delta\), then all edges of \(H[A]\) are nonnegative, so \(H\) has at least \(\delta\) nonnegative edges and the result follows.
 Assume now that \(|E(H[A])|\le \delta-1\), and put \(E_0=E(H[A])\). Then \(A\) is independent in \(H-E_0\).
 By the hypothesis, there exists a pseudo-matching \(M_1\) saturating \(A\) in \(H-E_0\).
 Since every vertex of \(A\) is covered by \(M_1\) at least once and every vertex of \(V\setminus A\) is covered at most once, we have
\[
\sum_{e\in M_1}\sum_{v\in e} f(v)
\ge
\sum_{v\in A} f(v)+\sum_{v\in V\setminus A} f(v)
=
\sum_{v\in V(H)} f(v)\ge 0.
\]
Hence, at least one edge \(e_1\in M_1\) is nonnegative.
 If \(|E(H[A])|=\delta-1\), then the \(\delta-1\) edges of \(H[A]\) together with \(e_1\) already give at least \(\delta\) nonnegative edges, so the result follows.
 Otherwise, set \(E_1=E_0\cup\{e_1\}\). Since \(|E_1|\le \delta-1\) and \(A\) is still independent in \(H-E_1\), the hypothesis again yields a pseudo-matching \(M_2\) saturating \(A\) in \(H-E_1\).
 By the same argument, \(M_2\) contains a nonnegative edge \(e_2\notin E_1\).
 Continuing inductively, after \(t\) steps we have a set
\[
E_t=E(H[A])\cup\{e_1,\dots,e_t\}
\]
with \(|E_t|\le \delta-1\), and \(A\) is independent in \(H-E_t\).
 Therefore, we can find another pseudo-matching saturating \(A\) in \(H-E_t\), yielding a new nonnegative edge \(e_{t+1}\notin E_t\).
 Repeating this process \(\delta-|E(H[A])|\) times, we obtain at least \(\delta\) distinct nonnegative edges in \(H\).
 Since \(f\) was arbitrary, \(H\) has the MMS property.
\end{proof}
As in the previous sections, we will construct some classes of hypergraphs with the MMS property.
 We will first describe rather simple families of hypergraphs with the MMS property, from which we will build further hypergraphs.
 Our stepping stone will be to use graphs of lower uniformities with the MMS property, as described in the following lemma.
 \begin{lemma}\label{lem:blowout}
  Let $H$ be a $k$-uniform hypergraph with the MMS property on vertex set $[n]$, and let its minimum degree be $\delta$.
  Define $H^{\bigoplus m}$ to be the $mk$-uniform hypergraph on vertex set $[mn]$ obtained as follows: each vertex $i\in [n]$ is replaced by the class
  \[
    V_i=\{i+jn:0\le j\le m-1\},
  \]
  and each edge $e\in E(H)$ is replaced by the edge
  \(
    \bigcup_{i\in e} V_i.
  \)
  Then $H^{\bigoplus m}$ has a minimum degree $\delta$ and has the MMS property.
\end{lemma}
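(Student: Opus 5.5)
The plan is to reduce directly to the MMS property of $H$ by averaging each class $V_i$, rather than going through Theorem~\ref{thm:pseudo-matching-sufficient}.

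\textbf{Degree.} The map $e\mapsto \bigcup_{i\in e}V_i$ is a bijection between $E(H)$ and $E(H^{\bigoplus m})$, since the classes $V_i$ partition $[mn]$ and a union of classes determines which classes occur in it. A vertex $i+jn\in V_i$ lies in the blown-up edge of $e$ if and only if $i\in e$. Hence every vertex of $V_i$ has degree $\deg_H(i)$, and the minimum degree of $H^{\bigoplus m}$ equals $\delta$.

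\textbf{MMS property.} Let $F:[mn]\to\mathbb{R}$ satisfy $\sum_v F(v)\ge 0$. Define $f:[n]\to\mathbb{R}$ by
\[
f(i)=\sum_{v\in V_i}F(v).
\]
Because the classes partition the vertex set, $\sum_{i\in[n]} f(i)=\sum_v F(v)\ge 0$. For each $e\in E(H)$ the same disjointness gives
\[
\sum_{v\in \bigcup_{i\in e}V_i}F(v)=\sum_{i\in e}f(i).
\]
So the blown-up edge is nonnegative under $F$ exactly when $e$ is nonnegative under $f$. By the MMS property of $H$ there are at least $\delta$ nonnegative edges of $H$ under $f$. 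Their images are distinct by the bijection above, so they give at least $\delta=\delta(H^{\bigoplus m})$ nonnegative edges of $H^{\bigoplus m}$.

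\textbf{Main obstacle.} Each step is an exact identity, so the only points to check are the two disjointness facts: that the classes partition $[mn]$, and that $e\mapsto\bigcup_{i\in e}V_i$ is injective, so nonnegative edges of $H$ are not merged in the blowout. The partition is explicit in the definition of $V_i$, and injectivity follows because the classes are disjoint and nonempty.
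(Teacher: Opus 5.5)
Your proposal is correct and follows the paper's proof: the paper likewise sums the weights over each class, $g(i)=\sum_{j=0}^{m-1} f(i+jn)$, transfers the nonnegative total to $H$, and lifts the $\delta$ nonnegative edges of $H$ to the blowout. Your explicit check that $e\mapsto\bigcup_{i\in e}V_i$ is injective, so that distinct nonnegative edges stay distinct, is a detail the paper leaves implicit.
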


\begin{proof}
Each vertex in the class $V_i$ lies precisely in the blown-up edges corresponding to edges of $H$ that contain $i$. Hence the degree of each vertex in $V_i$ is $\deg_H(i)$, so the minimum degree is preserved.

Let $f:V(H^{\bigoplus m})\to \mathbb R$ satisfy
\[
\sum_{v\in V(H^{\bigoplus m})} f(v)\ge 0.
\]
Define a weight function $g:V(H)\to\mathbb R$ by
\(
g(i)=\sum_{j=0}^{m-1} f(i+jn).
\)
Then
\[
\sum_{i=1}^n g(i)=\sum_{v\in V(H^{\bigoplus m})} f(v)\ge 0.
\]
Since $H$ has the MMS property, at least $\delta$ edges $e\in E(H)$ satisfy
\(
\sum_{i\in e} g(i)\ge 0.
\)
For each such edge, the corresponding blown-up edge in $H^{\bigoplus m}$ has $f$-weight
\[
\sum_{i\in e}\sum_{j=0}^{m-1} f(i+jn)=\sum_{i\in e}g(i)\ge0.
\]
Thus, $H^{\bigoplus m}$ has at least $\delta$ nonnegative edges and hence has the MMS property.
\end{proof}

We may refer to hypergraphs of the form $ H^{\bigoplus m}$ as \emph{blowouts}. In addition, if $H$ is regular, one may obtain the following result.
 \begin{prop}
 If $H$ is a $k$-uniform $d$-regular hypergraph on $n$ vertices with the MMS property, then for any positive integer $m$, the hypergraph $K_{mn}^{mk}$ has the MMS property.
 \end{prop}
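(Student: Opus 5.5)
The plan is to combine Lemma~\ref{lem:blowout} with Proposition~\ref{prop:pokrovskiy}. Given a $k$-uniform $d$-regular hypergraph $H$ on $[n]$ with the MMS property and a positive integer $m$, we form the blowout $H^{\bigoplus m}$. By construction it is $mk$-uniform: each edge $e$ of $H$ is replaced by $\bigcup_{i\in e}V_i$. The classes $V_i$ are pairwise disjoint, each has size $m$, and $|e|=k$, so every new edge has exactly $mk$ vertices. The vertex set is $[mn]$, since the classes $V_1,\dots,V_n$ partition $[mn]$.

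Next we check regularity. By the first part of the proof of Lemma~\ref{lem:blowout}, every vertex of $V_i$ has degree $\deg_H(i)$ in $H^{\bigoplus m}$. Since $H$ is $d$-regular, $H^{\bigoplus m}$ is therefore $d$-regular. We should also confirm that distinct edges of $H$ give distinct blown-up edges, so that no degrees collapse. This holds because $\bigcup_{i\in e}V_i$ determines $e$: $e$ is recovered as the set of residues modulo $n$. Lemma~\ref{lem:blowout} also gives that $H^{\bigoplus m}$ has the MMS property.

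Hence $H^{\bigoplus m}$ is a $d$-regular $mk$-uniform hypergraph on $mn$ vertices with the MMS property. Proposition~\ref{prop:pokrovskiy}, applied with $mk$ in place of $k$ and $mn$ in place of $n$, then yields that $K_{mn}^{mk}$ has the MMS property.

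No step here is a real obstacle. The only point needing care is that Proposition~\ref{prop:pokrovskiy} requires a genuinely $d$-regular, $mk$-uniform hypergraph on exactly $mn$ vertices, without repeated edges. All of this follows from the structure of the blowout, namely disjoint classes, injectivity of $e\mapsto\bigcup_{i\in e}V_i$, and preserved degrees, so the argument is just a direct composition of the two earlier results.
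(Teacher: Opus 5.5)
Your proposal is correct and matches the paper's proof. Like the paper, you form the blowout $H^{\bigoplus m}$, observe that it is $mk$-uniform, $d$-regular on $mn$ vertices and has the MMS property by Lemma~\ref{lem:blowout}, and then apply Proposition~\ref{prop:pokrovskiy}; your extra check that $e\mapsto\bigcup_{i\in e}V_i$ is injective is a harmless detail the paper leaves implicit.
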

 \begin{proof}
     Note that $H^{\bigoplus m}$ is $mk$-uniform and $d$-regular on $mn$ vertices. By applying Proposition~\ref{prop:pokrovskiy}, we obtain the desired result.
 \end{proof}
 This essentially reduces Conjecture~\ref{conj:mms} to cases when pairs $(n,k)$ are coprime.
 
Our goal is to construct MMS hypergraphs using the blowout operation. Our last ingredient for the construction of the MMS hypergraph is the following simple observation. It will be used when several blowouts are placed on the same vertex set.
\begin{lemma}\label{lem:edge-disjoint-union}
Let $F_1,\dots,F_t$ be pairwise edge-disjoint $r$-uniform hypergraphs on the same vertex set. Suppose that each $F_i$ is $d_i$-regular and has the MMS property. Then $F:=F_1\cup\cdots\cup F_t$ is $\left(\sum_{i=1}^t d_i\right)$-regular and has the MMS property.
\end{lemma}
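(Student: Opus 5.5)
The statement has two parts: regularity, which is a counting fact, and the MMS property, which we get by summing the guarantees of the $F_i$. Because the $F_i$ share the same vertex set $V$ and are pairwise edge-disjoint, every edge of $F$ lies in exactly one $F_i$. So the degree of a vertex $v$ in $F$ is
\[
\deg_F(v)=\sum_{i=1}^t \deg_{F_i}(v)=\sum_{i=1}^t d_i .
\]
Hence $F$ is $r$-uniform and $\left(\sum_i d_i\right)$-regular. In particular $\delta(F)=\sum_i d_i$ and $\delta(F_i)=d_i$.

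For the MMS property, take any $f:V\to\mathbb{R}$ with $\sum_{v\in V}f(v)\ge 0$. The vertex set of each $F_i$ is all of $V$, so the hypothesis $\sum_{v\in V(F_i)}f(v)\ge0$ holds for every $i$. Since $F_i$ has the MMS property, it contains a set $N_i\subseteq E(F_i)$ of at least $d_i$ edges that are nonnegative under $f$. Nonnegativity of an edge depends only on the edge as a vertex set and on $f$, not on which hypergraph it belongs to. Therefore every edge of $N_i$ is also a nonnegative edge of $F$.

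The sets $N_i$ are pairwise disjoint because the $E(F_i)$ are. So $F$ has at least $\sum_i |N_i|\ge\sum_i d_i=\delta(F)$ nonnegative edges, which proves that $F$ has the MMS property.

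There is no real obstacle here. The one point to state explicitly is that each $F_i$ is spanning on the common vertex set $V$, so the single global condition $\sum f\ge0$ applies to every $F_i$. Regularity is what makes the minimum degrees add exactly: $\delta(F)=\sum_i\delta(F_i)$. Without it, $\delta(F)$ could exceed $\sum_i \delta(F_i)$ and the count would fall short.
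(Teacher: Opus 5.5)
Your proposal is correct and follows the paper's proof: regularity from edge-disjointness on the common vertex set, then at least $d_i$ nonnegative edges from each $F_i$, all distinct, totalling $\sum_i d_i=\delta(F)$. Your added remarks that each $F_i$ spans $V$ and that regularity makes the minimum degrees add exactly are accurate clarifications the paper leaves implicit.
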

\begin{proof}
Regularity follows from edge-disjointness and from the fact that all $F_i$ have the same vertex set. Let $f$ be any weighting of the common vertex set with nonnegative total sum. For every $i$, the MMS property of $F_i$ gives at least $d_i$ nonnegative edges in $F_i$. Since the edge sets of the $F_i$ are pairwise disjoint, these nonnegative edges are distinct in $F$. Therefore $F$ has at least $\sum_i d_i=\delta(F)$ nonnegative edges.
\end{proof}
 
We now use Lemma~\ref{lem:blowout} and Lemma~\ref{lem:edge-disjoint-union} to construct more families of hypergraphs with the MMS property. Our main goal is to establish an analogue of Theorem~\ref{thm:regular-existence}: under suitable divisibility conditions, there is a range of degrees $d$ for which one can construct $d$-regular $k$-uniform hypergraphs with the MMS property.  If $k\mid n$, then from Theorem~\ref{thm:baranyai}, for each positive integer $d\le\binom{n-1}{k-1}$, such hypergraphs exist by considering $d$ perfect matchings from the Baranyai system.
 
 For some of the remaining cases, our plan is to use the results for $2$-uniform graphs in the form of Theorem~\ref{thm:regular-existence} and Proposition~\ref{prop:extremal-degree} to establish a range of possible regularities. We will focus on the case where uniformity $k=2m$, where $m$ satisfies $m\mid n$. Now, if we consider a $d$-regular graph with the MMS property on $\frac nm$ vertices. Then, by applying Lemma~\ref{lem:blowout}, we obtain a $d$-regular $2m$-uniform hypergraph with the MMS property on $n$ vertices. Thus, the degree ranges obtained earlier for the graphs carry over to the hypergraphs with $\frac nm$ in place of $n$. The case $k=2m\nmid n$ is the one of interest here, as we have established possible degree ranges for graphs and the case where uniformity divides the number of vertices, well understood. However, it is possible to generalise these ideas. If we blowout $\ell$ uniform hypergraphs, we would consider parameters $m\mid n$ and $\ell m\nmid n$. 
 
 In the context of higher uniformities, these blowouted hypergraphs are rather sparse. Indeed, a blowout hypergraph $k\ell$, a uniform hypergraph from $k$, a uniform one on $n$ vertices, may have a maximal degree of at most $\binom{n-1}{k-1}$, whereas the maximal degree for the blowout is $\binom{n\ell -1}{k\ell -1}$.
 However, by modifying the original construction, we can significantly improve it.
 The idea is to take an edge-disjoint union of multiple hypergraphs $H_i^{\bigoplus m}$, where the original graphs $H_i$ need not be the same.
 Specifically, each $H_i$ is a graph with the MMS property on $n$ vertices.
 We then assign pairwise distinct partitions into $m$-tuples of $[mn]$ to each of the $H_i^{\bigoplus m}$, by which we replace the original vertices of $H_i$ to obtain $H_i^{\bigoplus m}$ as before.
 We demonstrate this construction in the following example.
\begin{example}
  We construct $4$-uniform $16$-regular hypergraphs on $22$ vertices with the MMS property.
 First, we consider the following partitions of the $22$ vertices into $11$ pairs with $\mathcal{P}_1=\{(2k-1,2k)\mid 1\leq k\leq 11\}$ and $\mathcal{P}_2=\{(k,k+11)\mid 1\le k\leq 11\}$.
 We choose $H_1=K_{11}$ and $H_2=C_{11}^{1,2,3}$, which have the MMS property.
 We construct the blowouts $H_1^{\bigoplus2}$ and $H_2^{\bigoplus2}$, where a vertex in $H_i$ is blown out to a pair in $\mathcal{P}_i$.
 We observe that the produced hypergraphs are edge-disjoint. We may then set our final hypergraph to be $H_1^{\bigoplus2}\cup H_2^{\bigoplus2}$. It is $16$-regular and has the MMS property by Lemma~\ref{lem:edge-disjoint-union}.
 \end{example}

In order to get bounds on the range of regularities we can obtain, we have to estimate the number of certain partitions $[mn]$ into $m$-tuples.
 A sufficient condition that these partitions have to satisfy is that if we assign the $m$-tuples to be vertices of $K_n^{\bigoplus m}$ for each partition, this will result in edge-disjoint hypergraphs.
 In such a case, we can then replace some copies of $K_n$ with other valid graphs and still maintain edge-disjointness.
 We state this problem as a standalone extremal question because it is an interesting problem in its own right.
 \begin{q}\label{q:partition}
  Let $k,\ell,m,n$ be positive integers and let $\mathcal{P}_1,\mathcal{P}_2,\dots,\mathcal{P}_\ell$ be pairwise distinct partitions of $[mn]$ into subsets of size $m$, such that for every two distinct indices $a,b\in[\ell]$ there are no $k$-element subfamilies $\{e_1,\dots,e_k\}\subseteq \mathcal{P}_a$ and $\{f_1,\dots,f_k\}\subseteq \mathcal{P}_b$ satisfying
  \[
  \bigcup_{i=1}^k e_i=\bigcup_{i=1}^k f_i.
  \]
 We call such a family of partitions conflictless. What is the largest possible size of a conflictless family as a function of the parameters $n,m,k$?
 Let us further denote this maximum value by $\operatorname{MaxM}(n,m,k)$.
\end{q}
The parameter $k$ in Question~\ref{q:partition} is the uniformity of the original hypergraphs before the blowout; in the graph blowouts below we use $k=2$.
 There is the simple upper bound
 \[
 \operatorname{MaxM}(n,m,k)\leq\frac{\binom{nm}{km}}{\binom{n}{k}},
 \]
 since each $km$-tuple can be expressed in at most one way as a union of $k$ disjoint $m$-tuples from some $\mathcal{P}_i$.
 If $m,k$ are fixed and $n\rightarrow\infty$, then $\frac{\binom{nm}{km}}{\binom{n}{k}}=\Theta(n^{k(m-1)})$. However, in our context of constructing certain families of hypergraphs with the MMS property, we need a lower bound for $\operatorname{MaxM}(n,m,k)$.
 As mentioned, we are particularly interested in the case $k=2$.
 We first give a constructive lower bound. We then give a near-optimal lower bound in the regime where the number of blocks is fixed and the block size tends to infinity. In other words, $n$ is fixed and $m$ goes to infinity.
 First, we provide a construction for $m=2$. In this case, we aim for the lower bound $\operatorname{MaxM}(n,2,2)$ to be quadratic in $n$.
 \begin{prop}\label{prop:maxm-p22}
  Let $p>2$ be a prime. Then $\operatorname{MaxM}(p,2,2)\ge\binom{p}{2}$.
 \end{prop}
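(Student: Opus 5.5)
We identify $[2p]$ with $\mathbb{Z}_p\times\{0,1\}$ and build every partition as a perfect matching of the complete bipartite graph between the two copies of $\mathbb{Z}_p$. Each such matching is given by an affine bijection of $\mathbb{Z}_p$. For $a\in\{1,2,\dots,\tfrac{p-1}{2}\}$ and $c\in\mathbb{Z}_p$ we set
\[
\mathcal{P}_{a,c}=\bigl\{\{(x,0),(ax+c,1)\}: x\in\mathbb{Z}_p\bigr\}.
\]
Since $a\neq 0$ and $p$ is prime, $x\mapsto ax+c$ is a bijection of $\mathbb{Z}_p$, so $\mathcal{P}_{a,c}$ is a partition of $[2p]$ into pairs. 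This gives $\frac{p-1}{2}\cdot p=\binom{p}{2}$ families. They are pairwise distinct, because two affine maps that agree everywhere have the same parameters.

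Next we reduce the conflictless condition of Question~\ref{q:partition} (with $k=2$) to an algebraic one. Suppose $e_1\cup e_2=f_1\cup f_2$ with $e_1,e_2\in\mathcal{P}_{a,c}$ and $f_1,f_2\in\mathcal{P}_{b,d}$, where $(a,c)\neq(b,d)$. Every pair is a cross pair, so this $4$-set consists of two vertices $(x,0),(y,0)$ with $x\neq y$ and two vertices $(u,1),(v,1)$. Put $f(t)=at+c$ and $g(t)=bt+d$. Both matchings map $\{x,y\}$ onto $\{u,v\}$, so there are two cases.
\begin{itemize}
\item $f(x)=g(x)$ and $f(y)=g(y)$. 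Then two affine maps agree at two distinct points of the field $\mathbb{Z}_p$, so $f=g$. This contradicts $(a,c)\neq(b,d)$.
\item $f(x)=g(y)$ and $f(y)=g(x)$. Subtracting the two equations gives $a(x-y)=-b(x-y)$, hence $a=-b$ in $\mathbb{Z}_p$.
\end{itemize}

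It remains to rule out $a=-b$. Both $a$ and $b$ lie in $\{1,\dots,\frac{p-1}{2}\}$, which contains no pair of elements summing to $0$ modulo $p$. Here we use that $p$ is odd, so $a\neq -a$, and the set picks exactly one representative from each pair $\{\pm a\}$. Hence neither case can occur, and the family is conflictless. This gives $\operatorname{MaxM}(p,2,2)\ge\binom{p}{2}$.

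The only delicate point is the second case: the slopes must be chosen to avoid opposite pairs. Without this restriction one would get $p(p-1)$ matchings, but they would conflict. This is precisely why the bound is $\binom{p}{2}$ rather than twice that.
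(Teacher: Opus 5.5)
Your proof is correct and follows the paper's argument essentially line for line. It uses the same affine pairings $\mathcal{P}_{a,c}$ with slopes in $\{1,\dots,\frac{p-1}{2}\}$ and the same two-case comparison of the matched pairs. It also excludes the crossed case the same way, by noting that $a+b\not\equiv 0 \pmod p$ for slopes in that range.
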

\begin{proof}
  We assign to each of the $2p$ vertices an element of $\mathbb{Z}_p\times \mathbb{Z}_2$.
 Let
 \[
 S:=\left\{1,2,\dots,\frac{p-1}{2}\right\}.
 \]
 For each $(a,b)\in S\times\mathbb{Z}_p$, construct the pairing
  \[
  \mathcal{P}_{a,b}=\bigl\{\{(x,0),(ax+b,1)\}:x\in\mathbb{Z}_p\bigr\}.
  \]
 Since $a\neq0$, the map $x\mapsto ax+b$ is a bijection, so $\mathcal{P}_{a,b}$ is indeed a pairing. The constructed family has size $|S|p=\binom{p}{2}$.

 It remains to prove that these pairings are conflictless. Suppose for contradiction that two distinct pairs $(a,b),(c,d)\in S\times\mathbb{Z}_p$ produce a conflict. Then for some distinct $x,y\in\mathbb{Z}_p$ we have
 \[
 \{(x,0),(ax+b,1),(y,0),(ay+b,1)\}
 =
 \{(x,0),(cx+d,1),(y,0),(cy+d,1)\},
 \]
 after possibly swapping the two pairs in the second partition. Hence either
 \[
 ax+b=cx+d\quad\text{and}\quad ay+b=cy+d,
 \]
 or
 \[
 ax+b=cy+d\quad\text{and}\quad ay+b=cx+d.
 \]
 In the first case, subtracting gives $(a-c)(x-y)=0$, so $a=c$, and then $b=d$, a contradiction. In the second case, subtracting gives $(a+c)(x-y)=0$. Since $x\neq y$ and $a+c\not\equiv0\pmod p$ for $a,c\in S$, this is impossible. Thus the family is conflictless.
 \end{proof}
By taking several independent layers of the same construction, we obtain the following result.
 \begin{thm}\label{thm:maxm-prime}
  Let $p>2$ be a prime and let $m$ be a positive integer. Then $\operatorname{MaxM}(p,m,2)\ge\binom{p}{2}^{m-1}$.
 \end{thm}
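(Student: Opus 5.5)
We will lift the construction of Proposition~\ref{prop:maxm-p22} layer by layer. Identify the $pm$ vertices with $\mathbb{Z}_p\times\{0,1,\dots,m-1\}$ and call $\mathbb{Z}_p\times\{j\}$ the $j$-th layer. Let $S=\{1,\dots,\frac{p-1}{2}\}$ as before. For every tuple
\[
\tau=\bigl((a_1,b_1),\dots,(a_{m-1},b_{m-1})\bigr)\in (S\times\mathbb{Z}_p)^{m-1}
\]
we define the partition $\mathcal{P}_\tau$ consisting of the blocks
\[
B^\tau_x=\{(x,0),(a_1x+b_1,1),\dots,(a_{m-1}x+b_{m-1},m-1)\},\qquad x\in\mathbb{Z}_p .
\]
Since each $a_j\neq0$, every map $x\mapsto a_jx+b_j$ is a bijection of $\mathbb{Z}_p$. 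Hence the blocks are pairwise disjoint, each has size $m$ with exactly one vertex in every layer, and they cover all $pm$ vertices. So $\mathcal{P}_\tau$ is a partition of the required kind, and there are $|S\times\mathbb{Z}_p|^{m-1}=\binom{p}{2}^{m-1}$ tuples.

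The main step is showing that the family is conflictless for $k=2$; pairwise distinctness of the partitions then follows automatically. Suppose $\tau\neq\sigma$ and $B^\tau_x\cup B^\tau_y=B^\sigma_u\cup B^\sigma_v$ with $x\neq y$ and $u\neq v$.

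\emph{Layer $0$.} Intersecting with layer $0$ gives $\{x,y\}=\{u,v\}$, and we relabel so that $u=x$ and $v=y$.

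\emph{A differing coordinate.} Since $\tau\neq\sigma$, there is some $j$ with $(a_j,b_j)\neq(c_j,d_j)$, where $\sigma$ has $j$-th entry $(c_j,d_j)$. Intersect both sides with layers $0$ and $j$ only. The projection of $B^\tau_x$ onto these two layers is exactly the pair $\{(x,0),(a_jx+b_j,1)\}$ of the pairing $\mathcal{P}_{a_j,b_j}$ from Proposition~\ref{prop:maxm-p22}, after renaming layer $j$ to $1$. The same holds for $\sigma$ with $\mathcal{P}_{c_j,d_j}$.

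\emph{Reduction to Proposition~\ref{prop:maxm-p22}.} The equality of the two unions therefore restricts to
\[
P_x\cup P_y=Q_x\cup Q_y,
\]
where $P_x,P_y$ are distinct pairs in $\mathcal{P}_{a_j,b_j}$ and $Q_x,Q_y$ are distinct pairs in $\mathcal{P}_{c_j,d_j}$. This is precisely a conflict between two distinct pairings of that proposition, which was shown to be impossible. This contradiction proves the family is conflictless.

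For distinctness, if $\tau\neq\sigma$ then the pairings $\mathcal{P}_{a_j,b_j}$ and $\mathcal{P}_{c_j,d_j}$ differ as sets of pairs. Indeed, the first-case computation in that proof shows that agreeing on two points forces $(a_j,b_j)=(c_j,d_j)$. Hence the projections onto layers $0,j$ differ, and so $\mathcal{P}_\tau\neq\mathcal{P}_\sigma$.

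The only delicate point is verifying that restricting to two layers commutes with taking unions of blocks. This holds because every block meets every layer in exactly one vertex.
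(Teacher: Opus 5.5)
Your proof is correct and is essentially the paper's argument: the same layered affine construction on $\mathbb{Z}_p\times\{0,\dots,m-1\}$, with conflictlessness shown by first aligning the two unions in layer $0$ and then comparing another layer. The only difference is presentational. You pick one coordinate $j$ where $\tau$ and $\sigma$ differ and invoke the two-case computation of Proposition~\ref{prop:maxm-p22} on the projection to layers $0$ and $j$, whereas the paper reruns that computation in every layer to conclude $\mathbf{ab}=\mathbf{cd}$.
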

\begin{proof}
Assign each of the $mp$ vertices an element of $\mathbb{Z}_p\times\mathbb{Z}_m$.
 Let
 \[
 S:=\left\{1,2,\dots,\frac{p-1}{2}\right\},
 \qquad p_{a,b}(x):=ax+b
 \]
 for $(a,b)\in S\times\mathbb{Z}_p$.
 For any
\[
\mathbf{ab}:=((a_1,b_1),(a_2,b_2),\dots,(a_{m-1},b_{m-1}))
\in (S\times \mathbb{Z}_p)^{m-1},
\]
construct the partition
\[
\mathcal{P}_{\mathbf{ab}}
=
\left\{
\{(x,0),(p_{a_1,b_1}(x),1),\dots,(p_{a_{m-1},b_{m-1}}(x),m-1)\}:x\in\mathbb{Z}_p
\right\}.
\]
Each layer map $p_{a_i,b_i}$ is a bijection, so this is a partition of $\mathbb{Z}_p\times\mathbb{Z}_m$ into $p$ sets of size $m$. There are $\binom{p}{2}^{m-1}$ such partitions.

We claim that this family is conflictless. Suppose that two unions of two blocks, one from $\mathcal{P}_{\mathbf{ab}}$ and one from $\mathcal{P}_{\mathbf{cd}}$, are equal. Comparing the zeroth layer, we may write these two unions using the same pair $x,y\in\mathbb{Z}_p$ with $x\neq y$, possibly in the opposite order. For each $i\in[m-1]$, comparison of the $i$th layer gives either
\[
a_i x+b_i=c_i x+d_i\quad\text{and}\quad a_i y+b_i=c_i y+d_i,
\]
or
\[
a_i x+b_i=c_i y+d_i\quad\text{and}\quad a_i y+b_i=c_i x+d_i.
\]
The first alternative gives $(a_i,b_i)=(c_i,d_i)$, while the second would imply $(a_i+c_i)(x-y)=0$, impossible because $a_i,c_i\in S$. Hence $(a_i,b_i)=(c_i,d_i)$ for every $i$, so $\mathbf{ab}=\mathbf{cd}$. Therefore distinct partitions do not conflict.
 \end{proof}
Note that the gap between this lower bound and the upper bound increases with the value of $m$.

For the case where $n$ is fixed and $m$ goes to infinity, we will construct an auxiliary hypergraph. We will be particularly interested in the matchings in this hypergraph, which will correspond to some conflictless families .We shall use the following form of the Pippenger-Spencer theorem.
\begin{thm}[Pippenger--Spencer~\cite{PIPPENGER198924}]\label{thm:pippenger-spencer}
For every integer $r\ge2$ and every $\epsilon>0$, there exists $\mu=\mu(r,\epsilon)>0$ such that the following holds. If $\mathcal H$ is an $r$-uniform hypergraph with maximum degree $\Delta$, minimum degree at least $(1-\mu)\Delta$, and maximum codegree at most $\mu\Delta$, then $\mathcal H$ has a matching covering all but at most an $\epsilon$-proportion of its vertices.
\end{thm}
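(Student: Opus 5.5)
This is the classical Pippenger--Spencer theorem, and I would prove it by the Rödl nibble, i.e.\ a semi-random iterative construction of the matching. First a preliminary remark that makes the probabilistic estimates available. If $\mathcal H$ has an edge, then any two vertices of that edge have codegree at least $1$. So the hypothesis codegree $\le\mu\Delta$ forces $\Delta\ge 1/\mu$, and $\Delta$ is as large as we like once $\mu$ is small. (If $\mathcal H$ has no edges, the minimum degree condition forces $V=\emptyset$ and there is nothing to prove.) Write $D=\Delta$. I fix a small nibble parameter $\theta=\theta(r,\epsilon)$ and a number of rounds $T=T(r,\epsilon)$, with $T$ chosen so that $e^{-\theta T}<\epsilon/2$. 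Only at the end do I choose $\mu$ small in terms of $r,\epsilon,\theta,T$.

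A single round works as follows. Given a current hypergraph $\mathcal H_i$ on vertex set $V_i$ in which every vertex degree lies in $(1\pm\delta_i)D_i$ and every codegree is at most $\delta_i D_i$:
\begin{enumerate}
\item select each edge independently with probability $\theta/D_i$;
\item keep a selected edge if it meets no other selected edge, and add all kept edges to the matching;
\item delete every vertex covered by a selected edge, kept or not, and also delete all edges meeting a deleted vertex.
\end{enumerate}
By the near-regularity and small codegrees, a vertex is covered in this round with probability about $\theta$. It survives with probability about $e^{-\theta}$, so $|V_{i+1}|\approx e^{-\theta}|V_i|$. An edge through a surviving vertex $v$ survives if its other $r-1$ vertices avoid selected edges. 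Because codegrees are small, these events are nearly independent, which gives a new degree $D_{i+1}\approx D_i e^{-\theta(r-1)}$. Codegrees can only decrease, so the ratio codegree$/D_{i+1}$ grows by at most a constant factor $e^{\theta(r-1)}$ per round. Uncovered but discarded vertices, i.e.\ those in selected but non-kept edges, amount to an $O(\theta^2)$ fraction per round. Over $T$ rounds this totals about $T\theta^2=O(\theta\log(1/\epsilon))$, which is below $\epsilon/2$ for small $\theta$.

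The key step is concentration. I must show that with positive probability, in fact with probability $1-o(1)$, simultaneously for all surviving vertices $v$ the new degree $\deg_{\mathcal H_{i+1}}(v)$ lies within $(1\pm\delta_{i+1})D_{i+1}$, with $\delta_{i+1}=C(\delta_i+D_i^{-1/3})$ for a constant $C$. I would get this by a martingale or Talagrand-type bound on the edge-exposure process restricted to edges at distance at most $2$ from $v$. The Lipschitz constant is controlled by the codegree bound, and the deviation is then about $\sqrt{D_i}\,\mathrm{polylog}$. A union bound is not directly available over all vertices. Instead I would use the Lovász Local Lemma: the bad event at $v$ depends only on edges within distance $2$, so it is mutually independent of all but $D^{O(r)}$ other bad events, and the tail bound $\exp(-D^{\Omega(1)})$ beats this. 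A separate concentration argument, using the large-deviation bound for the number of uncovered-and-discarded vertices, shows that the total waste per round is $O(\theta^2)|V_i|$.

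I expect this concentration step, together with the error propagation, to be the main obstacle. The error $\delta_T$ after $T$ rounds is at most $C^T(\mu+D^{-1/3})$. Since $T$ depends only on $r,\epsilon$ and $D\ge1/\mu$, choosing $\mu$ small enough makes $\delta_i$ tiny in every round, so the nibble heuristics remain valid throughout. After $T$ rounds the matching built so far covers all vertices except the surviving ones, about $e^{-\theta T}|V|<\epsilon|V|/2$, and the discarded ones, under $\epsilon|V|/2$. This proves the statement.
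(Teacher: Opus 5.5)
The paper does not prove this statement; it quotes it from Pippenger and Spencer and only applies it, so your outline has to stand on its own. The nibble framework is the right one. There is a genuine gap, however: the concentration step fails under the hypothesis actually assumed.

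\textbf{The concentration step.} Codegrees are only bounded by $\mu\Delta$, with $\mu$ a fixed constant. A single selected edge meeting the link of $v$ can therefore kill up to $(r-1)\mu D_i$ edges at $v$, so the Lipschitz constants are of order $\mu D_i$. The true fluctuation of $\deg_{\mathcal H_{i+1}}(v)$ is then of order $\sqrt{\theta\mu}\,D_i$, not $\sqrt{D_i}\,\mathrm{polylog}$. This is intrinsic, not a weakness of the inequality you choose. For example, take $r=3$ and let the link of $v$ consist of $1/\mu$ disjoint stars of size $\mu D$ with centres $w_1,\dots,w_{1/\mu}$. This happens for a positive fraction of vertices in suitable $D$-regular $3$-graphs with all codegrees at most $\mu D$. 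The surviving degree of $v$ is essentially $e^{-\theta}\mu D$ times the number of uncovered centres, which is roughly a $\mathrm{Bin}(1/\mu,e^{-\theta})$ variable.

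Consequences:
\begin{itemize}
\item $\mathbb{P}(B_v)$ is bounded below by a constant depending on $\mu,\theta,\delta_{i+1}$ but not on $D$. The local lemma condition $e\,\mathbb{P}(B_v)\,(D^{O(1)}+1)\le 1$ therefore fails once $D$ is large.
\item A positive proportion of surviving vertices will leave the window $(1\pm\delta_{i+1})D_{i+1}$ with high probability. So the invariant ``every surviving vertex is near-regular'' cannot be maintained by any argument.
\item Your recurrence $\delta_{i+1}=C(\delta_i+D_i^{-1/3})$ with $\exp(-D^{\Omega(1)})$ tails is what one gets when codegrees are polynomially smaller than $D$, not under a codegree bound $\mu\Delta$.
\end{itemize}

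\textbf{The missing idea.} As in the Pippenger--Spencer argument (see also its presentation in Alon and Spencer's book), you should tolerate a small set of bad vertices. Track three things:
\begin{itemize}
\item all but $\gamma_i|V_i|$ vertices have degree $(1\pm\gamma_i)D_i$;
\item every vertex has degree at most $K_iD_i$;
\item all codegrees are at most $\gamma_iD_i$.
\end{itemize}
The maximum-degree bound is free, since degrees only decrease, so $K_{i+1}=K_ie^{\theta(r-1)}$ works. It also ensures, by double counting and Markov, that bad vertices occupy only a small part of the link of a typical good vertex.

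Within one round only first and second moments are needed. Small codegrees bound $\mathrm{Var}(\deg_{\mathcal H_{i+1}}(v))$ by $O(\gamma_i+D_i^{-1})D_i^2$, with constants depending on $r$ and $K_i$. By Chebyshev, a good vertex then turns bad only with small probability. Markov bounds the number of new bad vertices and the waste, and Chebyshev controls $|V_{i+1}|$, which works because uncovered-indicators of vertices with disjoint stars are independent. Then fix one outcome in which these finitely many events hold together. No local lemma is needed.

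This also repairs a second flaw in your plan. A local-lemma outcome has only positive, possibly exponentially small, probability. So it cannot simply be intersected with your separate global concentration statements for $|V_{i+1}|$ and for the waste.

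\textbf{A small slip.} When $\Delta=0$, the minimum-degree condition reads $\delta\ge 0$; it does not force $V=\emptyset$. The statement is in fact false for edgeless hypergraphs on a nonempty vertex set, so $\Delta\ge 1$ must be assumed implicitly. After that, your observation $\Delta\ge 1/\mu$ is correct.
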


\begin{thm}\label{thm:maxm-fixed-n-large-m}
For every fixed integer $n\ge5$ and every $\epsilon>0$, there exists $m_0=m_0(n,\epsilon)$ such that for all $m\ge m_0$,
\[
\operatorname{MaxM}(n,m,2)
\ge
(1-\epsilon)\frac{\binom{nm}{2m}}{\binom n2}.
\]
\end{thm}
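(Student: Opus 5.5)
We encode conflictless families as matchings in an auxiliary hypergraph and then apply Theorem~\ref{thm:pippenger-spencer}. Let $X$ be the set of all $2m$-subsets of $[nm]$; these are the vertices of the auxiliary hypergraph $\mathcal H$. For each partition $\mathcal P$ of $[nm]$ into $n$ blocks of size $m$, we add to $\mathcal H$ the edge
\[
e(\mathcal P)=\{B\cup B' : B,B'\in\mathcal P,\ B\neq B'\}.
\]
This edge has exactly $r=\binom n2$ elements, because distinct pairs of disjoint blocks give distinct unions. Two partitions conflict, in the sense of Question~\ref{q:partition} with $k=2$, exactly when their edges share a vertex. So a matching in $\mathcal H$ is the same thing as a conflictless family, and a matching covering $(1-\epsilon)|X|$ vertices has $(1-\epsilon)\binom{nm}{2m}/\binom n2$ edges. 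This is the bound we want, so it suffices to check the hypotheses of Theorem~\ref{thm:pippenger-spencer} with $r=\binom n2$ fixed.

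\emph{Regularity.} The symmetric group on $[nm]$ acts transitively on $X$ and permutes the edges of $\mathcal H$. Hence $\mathcal H$ is regular, with degree
\[
\Delta = \tfrac12\binom{2m}{m}\,N_{n-2},
\]
where $N_j$ denotes the number of partitions of a $jm$-set into $j$ blocks of size $m$. Indeed, a partition whose edge contains a given $2m$-set $T$ is determined by splitting $T$ into two blocks and partitioning the complement. In particular, the minimum-degree condition holds with no loss.

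\emph{Codegree.} Fix distinct $T_1,T_2\in X$. We bound the number of partitions $\mathcal P$ with $T_1,T_2$ both unions of two blocks of $\mathcal P$. There are two cases.
\begin{itemize}
\item If $T_1\cap T_2=\emptyset$, then $\mathcal P$ splits each $T_i$ into two blocks and partitions the rest. The count is $\bigl(\tfrac12\binom{2m}{m}\bigr)^2 N_{n-4}$.
\item If $T_1\cap T_2\neq\emptyset$, then $T_1$ and $T_2$ must share exactly one block $B$, with $|T_1\cap T_2|=m$ and $B=T_1\cap T_2$. The count is then at most $N_{n-3}$.
\end{itemize}
We compare each count with $\Delta$ using $N_j/N_{j-1}=\binom{jm-1}{m-1}$:
\[
\frac{N_{n-3}}{\Delta}=\frac{2}{\binom{2m}{m}\binom{(n-2)m-1}{m-1}},
\qquad
\frac{\tfrac14\binom{2m}{m}^2N_{n-4}}{\Delta}=\frac{\binom{2m}{m}}{2\binom{(n-2)m-1}{m-1}\binom{(n-3)m-1}{m-1}}.
\]
The first ratio tends to $0$ trivially as $m\to\infty$.

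For the second ratio, the hypothesis $n\ge5$ is used. Since $(n-3)m\ge 2m$, both binomials in the denominator are at least $\binom{2m-1}{m-1}=\tfrac12\binom{2m}{m}$. The ratio is therefore at most $2/\binom{2m}{m}\to0$. When $n=4$ this ratio does not vanish, which is presumably why the theorem requires $n\ge5$.

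Thus for $m\ge m_0(n,\epsilon)$ the maximum codegree is at most $\mu\Delta$, with $\mu=\mu(\binom n2,\epsilon)$ from Theorem~\ref{thm:pippenger-spencer}. The theorem then yields the required near-perfect matching, and hence the conflictless family.

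The main obstacle is the codegree estimate, specifically getting the case split right so that every possible overlap pattern between $T_1$ and $T_2$ is covered. The asymptotic comparisons themselves are routine once the ratios $N_j/N_{j-1}$ are written down.
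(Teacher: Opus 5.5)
Your proposal is correct and follows the paper's proof essentially step for step. It uses the same auxiliary $\binom n2$-uniform hypergraph on $\binom{[nm]}{2m}$, the same regularity-by-symmetry argument and degree formula, the same two-case codegree bound with $|T_1\cap T_2|\in\{0,m\}$, and the same application of Theorem~\ref{thm:pippenger-spencer}. Your identity $N_j/N_{j-1}=\binom{jm-1}{m-1}$ and the resulting explicit bound $2/\binom{2m}{m}$ are a cleaner rendering of the paper's $O_n(\cdot)$ estimates, and your remark that the disjoint-case ratio does not vanish for $n=4$ correctly explains the hypothesis $n\ge5$.
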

\begin{proof}
Fix $n\ge5$ and $\epsilon>0$. We construct an auxiliary hypergraph $\mathcal H=\mathcal H_{n,m}$. Its vertex set is
\[
V(\mathcal H)=\binom{[nm]}{2m}.
\]
For every partition $\mathcal P$ of $[nm]$ into $n$ parts of size $m$, define an edge
\[
e_{\mathcal P}:=\{A\cup B:A,B\in\mathcal P,\ A\neq B\}.
\]
Thus $\mathcal H$ is $r$-uniform with $r=\binom n2$. A matching in $\mathcal H$ is precisely a conflictless family of partitions in the case $k=2$ of Question~\ref{q:partition}.

The hypergraph $\mathcal H$ is regular by symmetry. Its degree is
\[
\Delta
=
\frac12\binom{2m}{m}
\frac{((n-2)m)!}{(m!)^{n-2}(n-2)!}.
\]
Indeed, to count the edges containing a fixed $2m$-set $U$, we split $U$ into two unordered $m$-sets and then partition the remaining $(n-2)m$ vertices into $n-2$ unordered $m$-sets.

We now show that the maximum codegree is $o(\Delta)$ as $m\to\infty$, with $n$ fixed. Let $U,W\in\binom{[nm]}{2m}$ be distinct. If $U$ and $W$ both occur in some edge $e_{\mathcal P}$, then $|U\cap W|$ is either $m$ or $0$.

First suppose $|U\cap W|=m$. Then the three blocks $U\cap W$, $U\setminus W$, and $W\setminus U$ are forced. Hence the number of partitions containing both $U$ and $W$ is at most
\[
\frac{((n-3)m)!}{(m!)^{n-3}(n-3)!}.
\]
Dividing by $\Delta$ gives
\[
O_n\!\left(
\frac{m!\,((n-3)m)!}{\binom{2m}{m}((n-2)m)!}
\right)=o(1),
\]
where the notation $O_n(\cdot)$ allows constants depending only on $n$.

Now suppose $U\cap W=\emptyset$. Then we may first split $U$ into two $m$-sets and split $W$ into two $m$-sets, and then partition the remaining $(n-4)m$ vertices. Thus the codegree is at most
\[
\frac14\binom{2m}{m}^2
\frac{((n-4)m)!}{(m!)^{n-4}(n-4)!}.
\]
After division by $\Delta$, this is
\[
O_n\!\left(
\binom{2m}{m}\,m!^2\frac{((n-4)m)!}{((n-2)m)!}
\right)
=
O_n\!\left(
(2m)!\frac{((n-4)m)!}{((n-2)m)!}
\right)=o(1),
\]
because $n\ge5$ is fixed. Therefore $\Delta_2(\mathcal H)=o(\Delta)$.

Since $r=\binom n2$ is fixed, Theorem~\ref{thm:pippenger-spencer} applies for all sufficiently large $m$. Therefore $\mathcal H$ has a matching covering all but at most an $\epsilon$-proportion of its vertices. Each edge of $\mathcal H$ has size $\binom n2$, so this matching has size at least
\[
(1-\epsilon)\frac{|V(\mathcal H)|}{\binom n2}
=
(1-\epsilon)\frac{\binom{nm}{2m}}{\binom n2}.
\]
This matching is a conflictless family of partitions, proving the desired lower bound for $\operatorname{MaxM}(n,m,2)$.
\end{proof}

\begin{rem}\label{rem:nibble-gap}
Theorem~\ref{thm:maxm-fixed-n-large-m} should be read as a fixed-$n$, large-$m$ result. If instead $m$ is fixed and $n\to\infty$, then the auxiliary hypergraph above has growing uniformity $\binom n2$. The fixed-uniformity form of Theorem~\ref{thm:pippenger-spencer} is then not sufficient without quantitative control of the parameter $\mu(r,\epsilon)$, or without using a nibble theorem stated directly for growing uniformity.
\end{rem}

We can translate the conflictless-partition construction back into the language of the MMS property, in the way it was already outlined.
 \begin{lemma}\label{lem:blowout-union-general}
Let $\mathcal{P}_1,\dots,\mathcal{P}_\ell$ be a conflictless family of partitions of $[mn]$ into $m$-tuples in the case $k=2$ of Question~\ref{q:partition}. Suppose that, for each $i\in[\ell]$, there exists a $d_i$-regular graph $G_i$ on $n$ vertices with the MMS property, where $d_i\ge0$. Then there exists a $2m$-uniform $\left(\sum_{i=1}^\ell d_i\right)$-regular hypergraph on $mn$ vertices with the MMS property.
 \end{lemma}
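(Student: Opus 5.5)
For each $i\in[\ell]$ I identify the vertex set of $G_i$ with the $n$ blocks of $\mathcal{P}_i$ through an arbitrary bijection $\sigma_i\colon V(G_i)\to\mathcal{P}_i$. I then form the generalised blowout $F_i$: the $2m$-uniform hypergraph on $[mn]$ whose edges are $\sigma_i(u)\cup\sigma_i(v)$ for $uv\in E(G_i)$. Up to relabelling the vertices of $[mn]$, $F_i$ is exactly $G_i^{\bigoplus m}$ from Lemma~\ref{lem:blowout}; the lemma only uses that the classes $V_i$ partition the vertex set into $m$-sets, so a permutation of $[mn]$ carries one construction to the other. Lemma~\ref{lem:blowout} and its proof then give the following. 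Every vertex lying in block $\sigma_i(u)$ has degree $\deg_{G_i}(u)=d_i$, so $F_i$ is $d_i$-regular. Moreover $F_i$ has the MMS property, since the MMS property is invariant under relabelling vertices. If $d_i=0$, then $F_i$ is empty and contributes nothing, and this case is trivially consistent with everything below.

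The second step is to check that $F_1,\dots,F_\ell$ are pairwise edge-disjoint, and this is where the conflictless hypothesis enters. Suppose some $2m$-set is an edge of both $F_a$ and $F_b$ with $a\neq b$. Then it equals $e_1\cup e_2$ for two distinct blocks $e_1,e_2\in\mathcal{P}_a$. It also equals $f_1\cup f_2$ for two distinct blocks $f_1,f_2\in\mathcal{P}_b$. These are precisely the $2$-element subfamilies forbidden in Question~\ref{q:partition} with $k=2$, which is a contradiction. Within a single $F_i$ the edges are already distinct, because $\sigma_i$ is a bijection and distinct pairs of disjoint blocks have distinct unions.

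Finally I apply Lemma~\ref{lem:edge-disjoint-union} to $F_1,\dots,F_\ell$, which are all $2m$-uniform on the common vertex set $[mn]$. This shows that $F=\bigcup_i F_i$ is $\bigl(\sum_i d_i\bigr)$-regular and has the MMS property. The only point needing care is the first step: confirming that Lemma~\ref{lem:blowout} transfers to an arbitrary partition into $m$-tuples rather than the specific classes $\{i+jn\}$. I would dispose of this by the relabelling remark, or equivalently by rerunning the one-line averaging argument with $g(u)=\sum_{x\in\sigma_i(u)}f(x)$.
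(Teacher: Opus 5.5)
Your proof is correct and matches the paper's argument: you form each blowout $F_i$ on the blocks of $\mathcal{P}_i$ via Lemma~\ref{lem:blowout}, use conflictlessness to get pairwise edge-disjointness, and conclude with Lemma~\ref{lem:edge-disjoint-union}. Your relabelling remark, the distinctness of edges within a single $F_i$, and the $d_i=0$ check just make explicit details that the paper leaves implicit.
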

\begin{proof}
For each $i$, identify the vertices of $G_i$ with the $m$-tuples in $\mathcal{P}_i$ and form the corresponding blowout $F_i$ on the common vertex set $[mn]$. By Lemma~\ref{lem:blowout}, each $F_i$ is $2m$-uniform, $d_i$-regular, and has the MMS property. Since the partitions are conflictless for pairs of blocks, no edge of $F_i$ is also an edge of $F_j$ for $i\neq j$. Hence, the $F_i$ are pairwise edge-disjoint, and Lemma~\ref{lem:edge-disjoint-union} gives the result.
\end{proof}

 \begin{cor}\label{cor:prime-blowout}
  Let $p\ge 11$ be a prime and let $m$ be a positive integer. For every even integer
  \[
  4\le d\le (p-1)\binom{p}{2}^{m-1},
  \]
  there exists a $2m$-uniform $d$-regular hypergraph with the MMS property on $mp$ vertices.
 \end{cor}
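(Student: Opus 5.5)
The plan is to combine Theorem~\ref{thm:maxm-prime} with Lemma~\ref{lem:blowout-union-general}. Setting $\ell:=\binom{p}{2}^{m-1}$, Theorem~\ref{thm:maxm-prime} supplies a conflictless family $\mathcal{P}_1,\dots,\mathcal{P}_\ell$ of partitions of $[mp]$ into $m$-tuples, with $k=2$. For each $i$ we will choose an even $d_i\in\{0\}\cup[4,p-1]$ and a $d_i$-regular graph $G_i$ on $p$ vertices with the MMS property, arranged so that $\sum_i d_i=d$. Lemma~\ref{lem:blowout-union-general} then yields the required $2m$-uniform $d$-regular hypergraph on $mp$ vertices.

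The graphs themselves come from Theorem~\ref{thm:regular-existence} with $n=p$. Since $p\ge 11$ is prime, $p$ is odd and $\phi(p)=p-1\ge 10\ge 8$. So for every even $4\le d_i\le p-1$ there is a $d_i$-regular graph on $p$ vertices with the MMS property. (For $d_i=p-1$ one may also take $K_p$ directly.) For $d_i=0$, the edgeless graph on $p$ vertices has $\delta=0$ and trivially has the MMS property. Lemma~\ref{lem:blowout-union-general} explicitly allows $d_i\ge0$, so such blocks simply contribute no edges.

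The remaining step, and the only real obstacle, is the arithmetic: writing every even $d$ with $4\le d\le (p-1)\ell$ as a sum of $\ell$ terms, each $0$ or an even number in $[4,p-1]$. The subtlety is that $2$ is not an allowed part, so we cannot greedily fill with $p-1$ and leave an arbitrary remainder.

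We handle this as follows. Write $d=q(p-1)+r$ with $0\le r<p-1$ and $r$ even, noting $p-1$ is even.
\begin{itemize}
\item If $r=0$, we use $q\le\ell$ copies of $p-1$.
\item If $r\ge4$, we use $q$ copies of $p-1$ and one part $r$. This needs $q+1\le\ell$, which holds because $d\le(p-1)\ell$ and $r>0$ force $q<\ell$.
\item If $r=2$ and $q\ge1$, we replace one $p-1$ and the remainder $2$ by the parts $p-3$ and $4$. Both are even and lie in $[4,p-1]$, since $p\ge11$ gives $p-3\ge 8$. This again uses $q+1\le\ell$ parts, fine as before.
\item If $r=2$ and $q=0$, then $d=2<4$, which is excluded by hypothesis.
\end{itemize}
Every other index receives $d_i=0$. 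This completes the decomposition, and with it the proof. The hypothesis $p\ge 11$ is used twice: for $\phi(p)\ge8$, and to make $p-3\ge4$ in the $r=2$ case.
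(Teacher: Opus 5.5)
Your proposal is correct and follows the paper's proof essentially step for step: the same combination of Theorem~\ref{thm:maxm-prime}, Theorem~\ref{thm:regular-existence} (with the empty graph for degree $0$), and Lemma~\ref{lem:blowout-union-general}. The only difference is cosmetic: the paper does the arithmetic after halving, with $s=d/2$ and $M=(p-1)/2$, and handles the bad remainder $1$ by replacing $M+1$ with $(M-1)+2$. This is exactly your replacement of $(p-1)+2$ by $(p-3)+4$.
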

\begin{proof}
By Theorem~\ref{thm:maxm-prime}, there is a conflictless family of
\(
L=\binom{p}{2}^{m-1}
\)
partitions of $[mp]$ into $m$-tuples. Since $p\ge11$, Theorem~\ref{thm:regular-existence} applies to graphs on $p$ vertices and gives a $q$-regular graph with the MMS property for every even $q$ with $4\le q\le p-1$. We also allow $q=0$, using the empty graph.

It remains only to express $d$ as a sum of $L$ terms, each belonging to $\{0,4,6,\dots,p-1\}$. Put $M=(p-1)/2$ and $s=d/2$. We need to write $s$ as a sum of at most $L$ terms from $\{2,3,\dots,M\}$. This is immediate if $s\le M$. If $s>M$, write $s=qM+r$ with $0\le r<M$. If $r=0$, use $q$ terms equal to $M$; if $r\ge2$, use $q$ terms equal to $M$ and one term equal to $r$; if $r=1$, replace one term $M$ and the remainder $1$ by two terms $M-1$ and $2$. In the last case $q+1\le L$ because $s\le LM$ and $r=1$. Multiplying these terms by $2$ gives the desired representation of $d$. Now apply Lemma~\ref{lem:blowout-union-general} with these graph degrees.
\end{proof}

\begin{cor}\label{cor:fixed-n-near-optimal-blowout}
Let \(n\) be a fixed odd integer with \(\phi(n)\ge 8\), and let
\(\epsilon>0\). Then there exists \(m_0=m_0(n,\epsilon)\) such that for
every \(m\ge m_0\) and every even integer
\[
4\le d\le (1-\epsilon)(n-1)\frac{\binom{nm}{2m}}{\binom n2},
\]
there exists a \(2m\)-uniform \(d\)-regular hypergraph with the MMS property
on \(nm\) vertices.
\end{cor}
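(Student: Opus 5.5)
This follows the proof of Corollary~\ref{cor:prime-blowout}, with Theorem~\ref{thm:maxm-fixed-n-large-m} supplying the conflictless family. Given $n$ and $\epsilon$, let $m_0=m_0(n,\epsilon)$ be the threshold from Theorem~\ref{thm:maxm-fixed-n-large-m}. For every $m\ge m_0$ we then have a conflictless family (for $k=2$) of
\[
L\ge (1-\epsilon)\frac{\binom{nm}{2m}}{\binom n2}
\]
partitions of $[nm]$ into $m$-tuples. That theorem needs $n\ge5$, which we must check. An odd $n$ with $\phi(n)\ge8$ satisfies $n\ge 15$, since $\phi(n)\le 6$ for every odd $n\le 13$. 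It also satisfies $n-1\ge\phi(n)$. If $L$ is not an integer we may take $L=\lfloor(1-\epsilon)\binom{nm}{2m}/\binom n2\rfloor$. Since $d$ is an integer, $d\le (n-1)L$ still holds.

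Next we fix the admissible degrees of the building blocks. By Theorem~\ref{thm:regular-existence}, for every even $q$ with $4\le q\le \phi(n)$ there is a connected $q$-regular graph on $n$ vertices with the MMS property. We also allow $q=0$ via the empty graph, which has minimum degree $0$ and so trivially has the MMS property. The goal is to write $d=\sum_{i=1}^{L} d_i$ with each $d_i\in\{0\}\cup\{4,6,\dots,\phi(n)\}$, and then apply Lemma~\ref{lem:blowout-union-general}. That lemma turns the family of partitions together with the graphs $G_i$ into a $2m$-uniform $d$-regular hypergraph on $nm$ vertices with the MMS property.

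The number-theoretic decomposition is where I expect the difficulty, for two reasons. First, Theorem~\ref{thm:regular-existence} only reaches degree $\phi(n)$, not $n-1$, so the bound $d\le (n-1)L$ is not directly matched by $L$ graphs of degree at most $\phi(n)$. To close this gap I would replace the circulant building blocks by a graph of larger degree. The cleanest choice is $K_n$, which is $(n-1)$-regular and has the MMS property since $n\ge 8$ and $n-1\ge\lceil n/2\rceil+2$ (Proposition~\ref{prop:extremal-degree}). More generally, Proposition~\ref{prop:extremal-degree} gives every admissible even degree from $\lceil n/2\rceil+2$ up to $n-1$. Together with Theorem~\ref{thm:regular-existence}, this covers all even $q$ in $[4,n-1]$ provided $\phi(n)\ge\lceil n/2\rceil+1$. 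That inequality can fail (for example $n=15$), but we only need a flexible set of parts, not every even degree. Second, I would write $s=d/2$ and $M=(n-1)/2$ and perform the greedy decomposition from Corollary~\ref{cor:prime-blowout}: take $q$ parts of size $M$ (copies of $K_n$) plus a remainder $r<M$. If $r$ is achievable with one part, we are done. If not, I would split the remainder together with one copy of $M$ into two achievable parts, using the parts $2,3,\dots,\phi(n)/2$ coming from the circulant graphs. This handles $r=1$ as before via $(M-1)+2$, provided $2(M-1)$ is an achievable degree; Proposition~\ref{prop:extremal-degree} gives this as long as $n-3\ge\lceil n/2\rceil+2$, which holds for $n\ge 15$. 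The extra part is available because $s\le LM$ forces $q+1\le L$ whenever $r\neq 0$. The remaining care is the case analysis showing every $r$ (or $r+M$) splits into at most two achievable halves, which should follow since the achievable set $\{2,\dots,\phi(n)/2\}\cup\{\lceil n/2\rceil/2+1,\dots,M\}$ (roughly) has gaps shorter than its smallest long interval.
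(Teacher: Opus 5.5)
Your proposal follows the paper's proof. It uses the conflictless family from Theorem~\ref{thm:maxm-fixed-n-large-m} and building blocks of degree $0$, $4,\dots,\phi(n)$ and $\lceil n/2\rceil+2,\dots,n-1$ from Theorem~\ref{thm:regular-existence} and Proposition~\ref{prop:extremal-degree}. It then applies the greedy split of Corollary~\ref{cor:prime-blowout} and Lemma~\ref{lem:blowout-union-general}. You are in fact more explicit than the paper about the gap in admissible degrees.

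The case analysis you leave open does close. For odd $n$ the large admissible halves form the interval $[\lceil (n+5)/4\rceil, M]$; this is your $\lceil n/2\rceil/2+1$. When $q\ge 1$ and $r$ lies in the gap, you have $r\ge 5$. Hence
\[
2\lceil (n+5)/4\rceil\le M+5\le M+r\le 2M-1,
\]
so $M+r$ is a sum of two halves from that interval. When $q=0$ there is no copy of $M$ to borrow. You can still use parts equal to $2$ and $3$, since $L$ grows exponentially in $m$; the case $s=1$ is excluded by $d\ge 4$.

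Two small factual slips should be fixed. First, $\phi(11)=10$ and $\phi(13)=12$, so the correct bound is $n\ge 11$, not $n\ge 15$. This is harmless, because $n-3\ge\lceil n/2\rceil+2$ holds for every odd $n\ge 11$. Second, replacing $L$ by $\lfloor(1-\epsilon)\binom{nm}{2m}/\binom n2\rfloor$ can break $d\le (n-1)L$; integrality of $d$ does not give it. The step is unnecessary, because the size of the family is itself an integer at least $(1-\epsilon)\binom{nm}{2m}/\binom n2$.
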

\begin{proof}
Put
\(
U_m:=\frac{\binom{nm}{2m}}{\binom n2}.
\)
Apply Theorem~\ref{thm:maxm-fixed-n-large-m} with $\epsilon/2$ in place of $\epsilon$. For all sufficiently large $m$, there is a conflictless family of partitions of $[nm]$ into $m$-tuples of size at least
\[
L:=\left\lfloor (1-\epsilon/2)U_m\right\rfloor.
\]
Increasing $m_0$ if necessary, we may assume that $L\ge (1-\epsilon)U_m$.

Since \(\phi(n)\ge 8\), Theorem~\ref{thm:regular-existence} and Proposition~\ref{prop:extremal-degree} gives a $q$-regular graph with the MMS property on $n$ vertices for every even $q$ with $4\le q\le \phi(n)$ and $\lceil\frac{n}{2}\rceil+2\leq q\le (n-1)$; we again allow $q=0$ by using the empty graph. The assumed upper bound on $d$ gives $d\le (n-1)L$. As in the proof of Corollary~\ref{cor:prime-blowout}, every even $d\le (n-1)L$ can be written as a sum of $L$ terms from $$\{0,4,6,\dots,\phi(n)\}\cup\{\lceil\frac{n}{2}\rceil+2,\dots (n-1)\}.$$ Applying Lemma~\ref{lem:blowout-union-general} to these graph degrees gives the required $2m$-uniform $d$-regular MMS hypergraph.
\end{proof}

\bibliographystyle{plainnat}
\bibliography{bibliography}

\end{document}